\documentclass[10pt]{amsart}
\usepackage[english]{babel}
\usepackage{tikz}\usepackage{tikz-cd}

\usetikzlibrary{patterns}
\usetikzlibrary{decorations.pathreplacing}

\usepackage[DIV10, headinclude, BCOR 10mm]{typearea}
\usepackage{amssymb}
\usepackage[initials]{amsrefs}
\usepackage{mathrsfs}
\usepackage{enumitem}
\usepackage{comment}

\usepackage[all]{xy}
\SelectTips{cm}{}


\newcommand{\bbN}{{\mathbb N}}
\newcommand{\bbQ}{{\mathbb Q}}

\newcommand{\bbC}{{\mathbb C}}

\newcommand{\calA}{\mathcal{A}}
\newcommand{\calB}{\mathcal{B}}

\newcommand{\calF}{\mathcal{F}}

\newcommand{\CW}{{\mathrm{cw}}}


\newcommand{\bs}{\backslash}

\newcommand{\id}{\operatorname{id}}
\newcommand{\im}{\operatorname{im}}

\newcommand{\coker}{\operatorname{coker}}

\newcommand{\trace}{\operatorname{tr}}
\newcommand{\betti}{\operatorname{\beta}}
\newcommand{\vol}{\operatorname{vol}}

\newcommand{\closure}{\operatorname{clos}}
\newcommand{\hs}{\operatorname{hs}}
\newcommand{\map}{\operatorname{map}}

\newcommand{\norm}[1]{{\lVert #1\rVert}}


\newtheorem{theorem}{Theorem}[section]

\newtheorem{corollary}[theorem]{Corollary}

\newtheorem{prop}[theorem]{Proposition}
\theoremstyle{definition}
\newtheorem{definition}[theorem]{Definition}
\newtheorem{defn}[theorem]{Definition}

\newtheorem{example}[theorem]{Example}

\newtheorem{remark}[theorem]{Remark}

\newtheorem{notation}[theorem]{Notation}

\numberwithin{equation}{section}

\begin{document}
\title{$\ell^2$-Betti numbers of discrete and non-discrete groups}
\author{Roman Sauer}
\address{Karlsruhe Institute of Technology, Karlsruhe, Germany}
\email{roman.sauer@kit.edu}
\thanks{I thank Henrik Petersen for corrections and helpful comments.}
\subjclass[2010]{Primary 20F69; Secondary 22D25}

\maketitle
\begin{abstract}
	In this survey we explain the definition of $\ell^2$-Betti numbers of locally compact groups -- both
	discrete and non-discrete. Specific topics include proportionality principle, L\"uck's dimension function, Petersen's definition for locally compact groups, some concrete examples.
\end{abstract}

\section{Introduction}
This short survey addresses readers who, motivated by geometric group theory, seek after a
brief overview of the (algebraic) definitions of $\ell^2$-Betti numbers
of discrete and non-discrete locally compact groups.
I limit myself to the group case, mostly ignoring the theory
of $\ell^2$-Betti numbers of equivariant spaces.

$\ell^2$-Betti numbers share many formal properties with ordinary Betti numbers, like
K\"unneth and Euler-Poincare formulas. But the powerful \emph{proportionality principle}
is a distinctive feature of $\ell^2$-Betti numbers. It also was a motivation to generalise the
theory of $\ell^2$-Betti numbers to locally compact groups.

In its easiest form, the proportionality principle states that
the $\ell^2$-Betti number in degree~$p$, which is henceforth denoted as $\betti_p(\_)$,
of a discrete group $\Gamma$ and
a subgroup $\Lambda<\Gamma$ of finite index satisfy the relation
\begin{equation}\label{eq: prop for finite index subgroups}
	\betti_p(\Lambda)=[\Gamma:\Lambda]\betti_p(\Gamma).
\end{equation}
The next instance of the proportionality principle involves
lattices $\Gamma$ and $\Lambda$ in a semisimple Lie group~$G$ endowed
with Haar measure~$\mu$.
It says that their
$\ell^2$-Betti numbers scale according to their
covolume:
\begin{equation}\label{eq: prop for lattices}
	\betti_p(\Gamma)\mu(\Lambda\bs G)=\betti_p(\Lambda)\mu(\Gamma\bs G)
\end{equation}
The proof of~\eqref{eq: prop for lattices} becomes easy
when we use the original, analytic definition of $\ell^2$-Betti numbers
by Atiyah~\cite{atiyah}.
According to this definition, the $p$-th $\ell^2$-Betti numbers of
$\Gamma$, denoted by $\betti_p(\Gamma)$, is given in terms of the heat kernel
of the Laplace operator on $p$-forms on the symmetric space $X=G/K$:
\[
	\betti_p(\Gamma)=\lim_{t\to\infty}\int_{\calF}\trace_\bbC(e^{-t\Delta^p}(x,x))\operatorname{dvol}.
\]
Here $\calF\subset X$ is a measurable fundamental domain for the $\Gamma$-action on $X$
and $e^{-t\Delta^p}(x,x): \operatorname{Alt}^p(T_xX)\to \operatorname{Alt}^p(T_xX)$ is the integral kernel -- called \emph{heat kernel} -- of the bounded operator $e^{-t\Delta^p}$ obtained from the unbounded
Laplace operator $\Delta^p$ on $L^2\Omega^p(X)$ by spectral calculus. Since $G$ acts
transitively on $X$ by isometries, it is clear that the integrand in the above formula
is constant in~$x$. So there is a constant $c>0$ only depending
on $G$ such that $\betti_p(\Gamma)=c\vol(\Gamma\bs X)$, from which one
deduces~\eqref{eq: prop for lattices}.

Gaboriau's theory of $\ell^2$-Betti numbers of measured equivalence
relations~\cite{gaboriau} (see also~\cite{sauer-betti} for an extension to measured groupoids) greatly generalised~\eqref{eq: prop for lattices}
to the setting of measure equivalence.

\begin{defn}
	Let $\Gamma$ and $\Lambda$ be countable discrete groups.
	If there is a Lebesgue measure
	space $(\Omega, \nu)$ on which $\Gamma$ and $\Lambda$ act
	in a commuting, measure preserving way such that both
	the $\Gamma$- and the $\Lambda$-action admit measurable fundamental domains of
	finite $\nu$-measure, we call $\Gamma$ and $\Lambda$ \emph{measure equivalent}.
	We say that $(\Omega,\nu)$ is a \emph{measure coupling} of $\Gamma$
	and $\Lambda$.
\end{defn}

\begin{theorem}[Gaboriau]
	If $(\Omega,\nu)$ is a measure coupling of $\Gamma$ and $\Lambda$, then
	\[
		\betti_p(\Gamma)\nu(\Lambda\bs\Omega)=\betti_p(\Lambda)\nu(\Gamma\bs\Omega).
	\]
\end{theorem}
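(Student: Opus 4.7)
The plan is to translate the measure-equivalence statement into a statement about $\ell^2$-Betti numbers of orbit equivalence relations, which is the natural setting of Gaboriau's theory. From the coupling $(\Omega,\nu)$ I first choose measurable fundamental domains $X\subset\Omega$ for the $\Gamma$-action and $Y\subset\Omega$ for the $\Lambda$-action, so $\nu(X)=\nu(\Gamma\bs\Omega)$ and $\nu(Y)=\nu(\Lambda\bs\Omega)$. Because the two actions commute, the $\Lambda$-action on $\Omega$ descends to a measure-preserving action on $X$: given $\lambda\in\Lambda$ and $x\in X$, let $\lambda\cdot x$ be the unique element of $X$ in the $\Gamma$-orbit of $\lambda x$. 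The analogous construction gives a measure-preserving $\Gamma$-action on $Y$, and after enlarging the coupling if necessary (for example by tensoring $\nu$ with a free Bernoulli factor) one may assume both induced actions are essentially free.

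Next I invoke the two cornerstones of Gaboriau's theory. The first asserts that the $\ell^2$-Betti numbers of a countable group equal those of the orbit equivalence relation $\scrR_{\Gamma\acts Z}$ of any essentially free p.m.p.\ action, where the dimension on the right is computed with respect to the natural trace of the von Neumann algebra of the relation. Since this trace scales linearly with the ambient measure, applying the result to our two induced actions yields
\[
	\betti_p(\scrR_{\Lambda\acts X}) = \nu(X)\,\betti_p(\Lambda),\qquad
	\betti_p(\scrR_{\Gamma\acts Y}) = \nu(Y)\,\betti_p(\Gamma).
\]
The second ingredient is the invariance of $\ell^2$-Betti numbers of a measured equivalence relation under restriction to a complete section. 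Let $\scrR$ be the orbit equivalence relation generated by the full $\Gamma\times\Lambda$-action on $\Omega$. Both $X$ and $Y$ are complete sections of $\scrR$, and because $\Gamma$ and $\Lambda$ commute the restrictions $\scrR|_X$ and $\scrR|_Y$ coincide with $\scrR_{\Lambda\acts X}$ and $\scrR_{\Gamma\acts Y}$ respectively. Hence $\betti_p(\scrR_{\Lambda\acts X})=\betti_p(\scrR_{\Gamma\acts Y})$, and substituting the two formulas above gives the claimed equality.

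The substantive difficulty is hidden inside Gaboriau's framework: setting up an $\ell^2$-homology theory for measured equivalence relations with a well-behaved von Neumann dimension, proving that the resulting invariants are preserved under restriction to complete sections (equivalently, stable orbit equivalence), and identifying them with the group $\ell^2$-Betti numbers for $\scrR_{\Gamma\acts Z}$. Once these are in place, the measure-equivalence theorem becomes the observation that a measure coupling automatically produces the stably orbit equivalent pair of actions to which these invariance results apply, with the scaling factor dictated precisely by the covolumes $\nu(\Gamma\bs\Omega)$ and $\nu(\Lambda\bs\Omega)$.
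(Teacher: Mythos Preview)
The paper does not prove this theorem; it merely states it, attributes it to Gaboriau~\cite{gaboriau}, and points to~\cite{sauer-betti} for the groupoid extension. So there is no in-paper proof to compare against.

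Your sketch is an accurate outline of Gaboriau's original strategy: pass from the coupling to the induced actions on fundamental domains, identify these as two complete sections of a common measured relation, and then appeal to the two main theorems of~\cite{gaboriau}---invariance of $\ell^2$-Betti numbers under restriction to complete sections (equivalently, stable orbit equivalence), and the identification $\betti_p(\Gamma)=\betti_p(\scrR_{\Gamma\acts Z})$ for any essentially free p.m.p.\ action. One small technical point: the ambient relation $\scrR$ you introduce lives on the possibly infinite-measure space $\Omega$, so you should either say explicitly that you are working with $\sigma$-finite relations and restricting to finite-measure complete sections, or bypass $\scrR$ entirely and argue directly that $\scrR_{\Lambda\acts X}$ and $\scrR_{\Gamma\acts Y}$ are stably orbit equivalent with compression constant $\nu(X)/\nu(Y)$. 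Either route is standard; just be clear which one you are taking. Otherwise your summary is faithful to the source and correctly flags where the real work lies.
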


If $\Gamma$ and $\Lambda$ are lattices in the same locally compact $G$, then
$G$ endowed with its Haar measure is a measure coupling for the actions described
by $\gamma\cdot g:=\gamma g$ and $\lambda\cdot g:=g\lambda^{-1}$ for $g\in G$ and
$\gamma\in \Gamma$, $\lambda\in \Lambda$. So Gaboriau's theorem
generalises~\eqref{eq: prop for lattices} to lattices in arbitrary locally
compact groups.

The geometric analogue of measure equivalence is quasi-isometry. 
One may wonder whether $\ell^2$-Betti numbers of quasi-isometric groups are also 
proportional. This is not true!  
The groups $\Gamma=F_3* (F_3\times F_3)$ and $\Lambda=F_4*(F_3\times F_3)$ are quasi-isometric. Their Euler characteristics are $\chi(\Gamma)=1$ and $\chi(\Lambda)=0$. By the Euler-Poincare formula (see Theorem~\ref{thm: euler poincare}) the $\ell^2$-Betti numbers of $\Gamma$ and $\Lambda$ are not proportional. We refer to~\citelist{\cite{harpe}*{p.~106}\cite{gaboriau-survey}*{2.3}} for a discussion of this example. However, the vanishing of $\ell^2$-Betti numbers is a quasi-isometry invariant. This is discussed in Section~\ref{sub: qi}. 

Motivated by~\eqref{eq: prop for finite index subgroups}
and~\eqref{eq: prop for lattices}, Petersen introduced
$\ell^2$-Betti numbers $\betti_\ast(G,\mu)$ of an arbitrary second countable, locally compact unimodular
group~$G$ endowed
with a Haar measure~$\mu$. The first instances of $\ell^2$-invariants for non-discrete locally compact groups 
are Gaboriau's first $\ell^2$-Betti number of a unimodular graph~\cite{gaboriau-percolation}, which is essentially one of its automorphism group, and the $\ell^2$-Betti numbers of buildings in the work of  
Dymara~\cite{dymara} and Davis-Dymara-Januszkiewicz-Okun~\cites{coxeter}, which are essentially ones of the automorphism groups of the buildings. 
We refer to~\cite{petersen+sauer+thom} for more information on the relation of the latter to Petersen's definition.

If $G$ is discrete, we always take $\mu$ to be
the counting measure. If a locally compact group possesses a lattice, it is unimodular.
Petersen~\cite{petersen-phd} (if $\Gamma<G$ is cocompact or $G$ is totally disconnected)
and then Kyed-Petersen-Vaes~\cite{kyed+petersen+vaes} (in general)
showed the following generalisation of~\eqref{eq: prop for finite index subgroups}. A proof which is much easier but only works for
totally disconnected groups can be found in~\cite{petersen+sauer+thom}.

\begin{theorem}[Petersen, Kyed-Petersen-Vaes]\label{thm: locally compact prop principle}
	Let $\Gamma$ be a lattice in a second countable, locally compact group $G$ with Haar measure~$\mu$. Then
	\[
		\betti_p(\Gamma)=\mu(\Gamma\bs G)\betti_p(G,\mu).
	\]
\end{theorem}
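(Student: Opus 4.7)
The plan is to exhibit $\betti_p(\Gamma)$ and $\betti_p(G,\mu)$ as von Neumann dimensions of the cohomology of a single cochain complex, and to reduce the identity to a scaling formula for dimensions under restriction from $L(G)$ to $L(\Gamma)$. Petersen's definition of $\betti_p(G,\mu)$ is built from the group von Neumann algebra $L(G)$, its canonical (semifinite) trace normalised via $\mu$, and a resolution of the trivial $G$-module by $L(G)$-modules involving $L^2(G,\mu)$ and its tensor powers. The lattice inclusion $\Gamma<G$ induces an inclusion $L(\Gamma)\hookrightarrow L(G)$ of von Neumann algebras, and the key algebraic input is the scaling formula
\[
	\dim_{L(\Gamma)} M = \mu(\Gamma\bs G)\cdot\dim_{L(G)} M
\]
valid for a suitable class of $L(G)$-modules $M$, viewed as $L(\Gamma)$-modules by restriction. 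Applied to the relevant cohomology module $M$, whose $L(G)$-dimension is $\betti_p(G,\mu)$ and whose $L(\Gamma)$-dimension is $\betti_p(\Gamma)$, the formula yields the theorem. The consistency check with~\eqref{eq: prop for finite index subgroups} works: for a finite-index inclusion $\Lambda<\Gamma$ with counting measure we have $\mu(\Lambda\bs\Gamma)=[\Gamma:\Lambda]$ and the scaling identity $\dim_{L(\Lambda)}=[\Gamma:\Lambda]\cdot\dim_{L(\Gamma)}$ is classical.

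I would treat the cocompact case first. Choose a contractible proper $G$-CW complex $X$ with $G$-cocompact action; such $X$ is automatically $\Gamma$-cocompact. The cellular chain complex of $X$, tensored with the appropriate $\ell^2$-type completions, yields a finitely generated complex of $L(G)$-modules that is simultaneously a complex of $L(\Gamma)$-modules by restriction. Because all modules involved are projective and finitely generated, the scaling formula applies directly to each cohomology module, giving the equality.

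The main obstacle is the non-cocompact case handled by Kyed-Petersen-Vaes. Here no cocompact $G$-CW model is available, so the geometric resolution must be replaced by an abstract one—for instance a bar resolution or an injective resolution of the trivial $L(G)$-module—and one has to extend the scaling formula from finitely generated projective modules to a significantly larger class. Controlling the dimensions of kernels and cokernels of unbounded maps, and invoking continuity of L\"uck's extended dimension function to pass to limits along exhaustions by cocompact pieces, are the delicate technical points where the bulk of the work lies. As noted in the excerpt, for totally disconnected $G$ a compact open subgroup $K<G$ supplies a Hecke-algebra resolution that is algebraically tractable and permits the comparison in a purely algebraic setting, bypassing these analytic difficulties.
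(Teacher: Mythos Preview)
The paper does not prove this theorem. It is stated as a result due to Petersen (cocompact or totally disconnected case) and Kyed--Petersen--Vaes (general case), with references to~\cite{petersen-phd}, \cite{kyed+petersen+vaes}, and~\cite{petersen+sauer+thom}; no argument is given in the text beyond those citations. So there is no ``paper's own proof'' to compare your proposal against.

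That said, your outline is broadly in the spirit of the cited proofs: the scaling identity for von Neumann dimensions under the inclusion $L(\Gamma)\hookrightarrow L(G)$ is indeed the engine, and the case split (cocompact/totally disconnected versus general) mirrors the literature. But what you have written is a strategy, not a proof. Two places where real work is hidden: first, the claim that a single cochain complex computes both $\betti_p(\Gamma)$ and $\betti_p(G,\mu)$ requires identifying the $L(\Gamma)$-dimension of the relevant $G$-equivariant cohomology with $\betti_p(\Gamma)$, which is a nontrivial comparison of resolutions (the $\Gamma$-bar resolution versus the $G$-bar resolution restricted to $\Gamma$); second, the scaling formula $\dim_{L(\Gamma)} M = \mu(\Gamma\backslash G)\cdot\dim_{L(G)} M$ is not automatic for arbitrary $L(G)$-modules and its range of validity (and the precise normalisation of traces that makes the covolume appear) is exactly what~\cite{kyed+petersen+vaes} has to establish. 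Your proposal acknowledges these difficulties but does not address them, so it should be read as a roadmap rather than an argument.
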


\subsection*{Structure of the paper}
In Section~\ref{sec: cohomology} we explain the definition of continuous cohomology. And we explain
how to compute it for discrete and totally disconnected groups by geometric models.
In Section~\ref{sec: dimension} we start with the definition of the von Neumann algebra
of a unimodular locally compact group and its semifinite trace. The goal is to understand
L\"uck's dimension function
for modules over von-Neumann algebras. In Section~\ref{sec: betti numbers} we define $\ell^2$-Betti numbers,
comment on their quasi-isometry invariance, and present some computations.

\section{Continuous cohomology}\label{sec: cohomology}

Throughout, let $G$ be a second countable, locally compact group, and let
$E$ be topological vector space with a continuous $G$-action, i.e.~the action
map $G\times E\to E$, $(g,e)\mapsto ge$, is continuous. A topological vector space
with such a $G$-action is called a \emph{$G$-module}. A \emph{$G$-morphism} of
$G$-modules is a continuous, linear, $G$-equivariant map.

\subsection{Definition via bar resolution} 
\label{sub:definition_cohomology}

Let $C(G^{n+1}, E)$ be the vector space of continuous maps from $G^{n+1}$ to~$E$.
The group $G$ acts from the left on $C(G^{n+1}, E)$ via
\[
	(g\cdot f)(g_0,\ldots, g_n)=gf(g^{-1}g_0,\ldots, g^{-1}g_n).
\]
If $G$ is discrete, the continuity requirement is void, and $C(G^{n+1},E)$
is the vector space of all maps $G^{n+1}\to E$.
The fixed set $C(G^{n+1}, E)^G$ is just the set of continuous equivariant maps.
The \emph{homogeneous bar resolution} is the chain complex
\[
	C(G, E)\xrightarrow{d^0} C(G^2, E)\xrightarrow{d^1} C(G^3, E)\to\ldots
\]
with differential
\[
d^n(f)(g_0,\ldots, g_{n+1})=\sum_{i=0}^{n+1}(-1)^if(g_0,\ldots, \hat{g_i},\ldots, g_{n+1}).
\]
The chain groups $C(G^{n+1}, E)$ are endowed with the compact-open topology
turning them into $G$-modules.  This is
a non-trivial topology even for discrete $G$, where it coincides with pointwise
convergence.

Since the differentials are $G$-equivariant we obtain a chain complex $C(G^{\ast+1}, E)^G$ by restricting to the equivariant maps.

\begin{defn}
	The cohomology of $C(G^{\ast+1}, E)^G$ is called the \emph{continuous cohomology}
	of $G$ in the $G$-module $E$ and denoted by $H^\ast(G,E)$.
\end{defn}

The differentials are continuous but usually have non-closed image, which leads to a non-Hausdorff quotient topology on the continuous cohomology. Hence it is natural
to consider the following.

\begin{defn}
	The \emph{reduced continuous cohomology} $\bar H^\ast(G,E)$
	of $G$ in $E$ is defined
	as the quotient $\ker(d^n)/\closure(\im(d^{n-1}))$ of $H^n(G, E)$,
	where we take the quotient by the closure of the image of the differential.
\end{defn}

In homological algebra it is common to compute
derived functors, such as group cohomology,
by arbitrary injective resolutions. The specific definition of continuous
cohomology by
the homogeneous bar resolution,
which is nothing else than usual group cohomology if
$G$ is discrete, is the quickest definition in the topological setting.
But there is also an approach in the sense of homological algebra, commonly referred to
as \emph{relative homological algebra}.

We call an injective $G$-morphism of $G$-modules \emph{admissible} if it admits
a linear, continuous (not necessarily $G$-equivariant) inverse.

\begin{defn}
	A $G$-module $E$ is \emph{relatively injective} if
	for any admissible injective $G$-morphism $j: U\to V$ and a $G$-morphism
	$f: U\to E$ there is a $G$-morphism $\bar f: V\to E$ such that $\bar f\circ j=f$.
\end{defn}

\begin{example}\label{exa: relatively injective modules}
	Let $E$ be a $G$-module. Then $C(G^{n+1}, E)$ is relatively injective.
	Let $j: U\to V$ be an admissible $G$-morphism. Let $s: V\to U$ be a linear continuous map with $s\circ j=\id_U$.
	Given a $G$-morphism $f: U\to C(G^{n+1}, E)$, the $G$-morphism
	\[
		(\bar f)(v)(g_0,\ldots g_n)=f\bigl(g_0s(g_0^{-1}v)\bigr)(g_0,\ldots, g_n)
	\]
	satisfies $\bar f\circ j=f$.
	Similarly, if $K$ is a compact subgroup, then $C((G/K)^{n+1}, E)$ is relatively
	injective. Here the extension is given by
	\[
		(\bar f)(v)([g_0],\ldots [g_n])=\int_K f\bigl(g_0ks(k^{-1}g_0^{-1}v)\bigr)([g_0],\ldots, [g_n])d\mu(k),
	\]
    where $\mu$ is the Haar measure normalised with $\mu(K)=1$.
\end{example}

As the analog of the fundamental lemma of homological algebra we have:

\begin{theorem}\label{thm: fundamental lemma}
	Let $E$ be a $G$-module. Let $0\to E\to E^0\to E^1\to\ldots$
	be a resolution of $E$ by relatively injective $G$-modules $E^i$.
	Then the cohomology of $(E^\ast)^G$ is (topologically) isomorphic
	to the continuous cohomology of $G$ in~$E$.
\end{theorem}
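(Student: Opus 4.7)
The plan is to mimic the classical fundamental lemma of homological algebra, with admissibility of splittings replacing genuine injectivity. The two resolutions to compare are the abstract one $0\to E\to E^0\to E^1\to\ldots$ and the bar resolution $0\to E\to C(G,E)\to C(G^2,E)\to\ldots$, where the augmentation $\epsilon\colon E\to C(G,E)$ sends $e$ to the constant map. First I would check that the bar resolution is in fact a strong (= split in the category of topological vector spaces) resolution of $E$: the contracting homotopy $h^n\colon C(G^{n+1},E)\to C(G^n,E)$ given by $h^n(f)(g_0,\ldots,g_{n-1})=f(1,g_0,\ldots,g_{n-1})$ is continuous though not $G$-equivariant, and a straightforward computation shows $dh+hd=\id$. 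Combined with the preceding example, the bar resolution is a strong resolution by relatively injective $G$-modules. As is customary in the continuous setting, I interpret "resolution" in the statement as \emph{strong} resolution, i.e.\ one possessing a continuous (not necessarily equivariant) contracting homotopy $s^n\colon E^n\to E^{n-1}$; without such a hypothesis the theorem is false.

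Next I would establish the comparison lemma: if $0\to E\to A^0\to A^1\to\cdots$ is a strong resolution and $0\to E\to B^0\to B^1\to\cdots$ is a resolution by relatively injective $G$-modules, then the identity on $E$ extends to a $G$-equivariant chain map $A^\ast\to B^\ast$, unique up to $G$-equivariant chain homotopy. The construction is the standard inductive one: at each stage the map $A^n\to B^n$ is produced by applying the defining property of relative injectivity to the admissible monomorphism $A^n/\ker(d^n)\hookrightarrow A^{n+1}$ (whose continuous splitting is built from the contracting homotopy of $A^\ast$) and to the previously constructed map into $B^n$. The homotopy uniqueness is proved by the same kind of lifting against relatively injective targets.

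Applying the comparison lemma in both directions gives $G$-equivariant chain maps $C(G^{\ast+1},E)\to E^\ast$ and $E^\ast\to C(G^{\ast+1},E)$ whose compositions are $G$-equivariantly chain homotopic to the identities (both extend $\id_E$, so uniqueness applies). Taking $G$-fixed points is exact on $G$-equivariant chain maps and homotopies, so it yields a chain homotopy equivalence $(E^\ast)^G\simeq C(G^{\ast+1},E)^G$, hence an isomorphism of cohomology groups. With a little care the constructed chain maps are continuous, which upgrades the isomorphism to a topological one on cohomology.

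The main obstacle is not the algebraic skeleton, which is routine, but the technical bookkeeping around continuity and admissible splittings: one must ensure at every inductive step that the sub-object $\ker(d^n)\subset A^n$ inherits a continuous splitting from the contracting homotopy, so that the admissibility hypothesis in the definition of relative injectivity is actually satisfied when one invokes it. This is where the strongness assumption on the resolution is genuinely used, and it is the reason the discrete-case proof by free/injective modules does not transcribe verbatim.
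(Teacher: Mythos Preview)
The paper is a survey and states this theorem without proof, citing it as the analogue of the fundamental lemma of homological algebra; so there is no proof in the paper to compare against. Your approach is exactly the standard one found, for instance, in Guichardet's book~\cite{guichardet} or in the relative homological algebra literature: verify that the bar resolution is strong and relatively injective (the contracting homotopy $h^n$ and Example~\ref{exa: relatively injective modules} do this), prove the comparison lemma by inductively lifting along admissible monics into relatively injective targets, and conclude by applying it in both directions. Your remark that ``resolution'' must be read as \emph{strong} resolution is correct and important; the paper's statement is implicitly making this assumption, as is standard in continuous cohomology. One small sharpening: for the topological isomorphism you want not merely that the chain maps are continuous but that the induced maps on cohomology are open onto their image; this follows because a continuous chain homotopy equivalence induces mutually inverse continuous maps on cohomology, hence topological isomorphisms. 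Otherwise your outline is complete and correct.
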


\subsection{Injective resolutions to compute continuous cohomology} 
\label{sub:how_to_compute}

The homogeneous bar resolution is useful for proving general properties of
continuous cohomology. But other
injective resolutions coming from geometry are better suited for computations.

\begin{defn}\label{def: geometric model}
	Let $G$ be totally disconnected (e.g.~discrete).
	Let $X$ be a cellular complex
	on which $G$ acts cellularly and continuously. We require
	that for each open cell
$e\subset X$ and each $g\in G$ with $ge\cap e\ne\emptyset$ multiplication by $g$
is the identity on~$e$.
	We say that $X$ is a \emph{geometric model} of~$G$ if $X$ is contractible, its $G$-stabiliser
	are open and compact, and the $G$-action on the $n$-skeleton $X^{(n)}$ is cocompact
	for every $n\in\bbN$.
\end{defn}

For simplicial $G$-actions the requirement on open cells
can always be achieved by passage to the barycentric subdivision.
A cellular complex with cellular $G$-action that satisfies the above
requirement on open cells and whose stabilisers are open is a \emph{$G$-CW-complex} in
the sense of~\cite[II.1.]{tomdieck}.
This means that the
$n$-skeleton $X^{(n)}$ is built from $X^{(n-1)}$ by attaching $G$-orbits of
$n$-cells according to pushouts of $G$-spaces of the form:
\[
	\xymatrix{
	\bigsqcup_{U\in \calF_n}G/U\times S^{n-1}\ar@{^{(}->}[d]\ar[r] & X^{(n-1)}\ar@{^{(}->}[d]\\
	\bigsqcup_{U\in \calF_n}G/U\times D^n\ar[r]     & X^{(n)}
	}
\]
Here $\calF_n$ is a set of representatives of
conjugacy classes
of stabilizers of $n$-cells. We require that each $X^{(n)}$ is cocompact. So $\calF_n$ is finite.
Each coset space $G/U$ is discrete.
Let us fix a choice of pushouts, which is not part of the data of a cellular complex and
corresponds to an equivariant choice of orientations for the cells.
The horizontal maps induces an isomorphism in relative homology by excision.
Let $C_\ast^{\CW}(X)$ be the cellular chain complex with $\bbC$-coefficients:
We obtain isomorphisms of discrete $G$-modules:
\begin{equation}\label{eq: geometric chain group}
	\bigoplus_{U\in \calF_n}\bbC[G/U]\cong H_n(\bigsqcup_{U\in \calF_n} G/U\times (D^n, S^{n-1}))\xrightarrow{\cong} H_n(X^{(n)}, X^{(n-1)})\overset{\mathrm{def}}{=}C_n^{\CW}(X)
\end{equation}
The $G$-action $(gf)(x)=gf(g^{-1}x)$ turns
\[\hom_\bbC(C_n^{\CW}(X),E)\cong\prod_{U\in\calF_n}C(G/U, E)= \bigoplus_{U\in\calF_n}C(G/U, E)\] into a
$G$-module. By Example~\ref{exa: relatively injective modules} it is
relatively injective. Further, the contractibility of $X$ implies
that $\hom_\bbC(C_\ast^{\CW}(X),E)$ is a
resolution of $E$. The next statement follows
from Theorem~\ref{thm: fundamental lemma}.

\begin{theorem}\label{thm: geometric model and cohomology}
	Let $X$ be a geometric model for a totally disconnected group~$G$.
	Then
	\[
		H^n(G, E)\cong H^n(\hom_\bbC(C_\ast^{\CW}(X), E)^G).
	\]
\end{theorem}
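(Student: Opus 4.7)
My plan is to verify the two hypotheses of Theorem~\ref{thm: fundamental lemma} for the augmented complex
\[
0\to E \to \hom_\bbC(C_0^{\CW}(X), E) \to \hom_\bbC(C_1^{\CW}(X), E) \to \cdots
\]
obtained by applying $\hom_\bbC(-, E)$ to the augmented cellular chain complex of $X$, whose $G$-equivariant differentials induce $G$-morphisms between the terms. Once I know that this complex is a resolution of $E$ by relatively injective $G$-modules, Theorem~\ref{thm: fundamental lemma} immediately gives the asserted isomorphism.

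For relative injectivity, I would invoke the identification
\[
\hom_\bbC(C_n^{\CW}(X), E) \cong \bigoplus_{U\in\calF_n} C(G/U, E)
\]
from~\eqref{eq: geometric chain group}. Each summand $C(G/U, E)$ is relatively injective by the second half of Example~\ref{exa: relatively injective modules} (applied with $n=0$ and $K=U$), using that the cell stabilisers in a geometric model are compact. Because $\calF_n$ is finite by the cocompactness of $X^{(n)}$ and the defining lifting property is stable under finite direct sums (assemble the extensions componentwise), the whole term is relatively injective.

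For the resolution property, contractibility of $X$ makes $\cdots \to C_1^{\CW}(X) \to C_0^{\CW}(X) \to \bbC \to 0$ exact; since $\bbC$ is a field, every short exact sequence of underlying vector spaces splits, so the exact functor $\hom_\bbC(-, E)$ preserves exactness and yields a resolution of $\hom_\bbC(\bbC, E) = E$. The only subtlety I foresee is checking that the $G$-module topology on $\hom_\bbC(C_n^{\CW}(X), E)$ matches the compact-open topology on $\bigoplus_{U} C(G/U, E)$ that is implicit in Example~\ref{exa: relatively injective modules}. This is automatic in our setting because $G$ is totally disconnected with open stabilisers, so each $G/U$ is discrete and both topologies reduce to pointwise convergence.
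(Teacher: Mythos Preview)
Your proposal is correct and follows essentially the same approach as the paper: the paper, in the paragraph immediately preceding the theorem, uses the identification~\eqref{eq: geometric chain group} together with Example~\ref{exa: relatively injective modules} to obtain relative injectivity, invokes contractibility of $X$ for exactness, and then applies Theorem~\ref{thm: fundamental lemma}. You simply spell out a few more details (closure of relative injectivity under finite sums, exactness of $\hom_\bbC(-,E)$, matching of topologies) than the paper does.
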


Not every discrete or totally disconnected group has a geometric model. For discrete groups
having a geometric model means that the group satisfies the finiteness condition $F_\infty$.
But by attaching enough equivariant cells
to increase connectivity one can show~\cite[1.2]{lueck-classifying}:

\begin{theorem}
	For every totally disconnected $G$ there is a contractible $G$-CW-complex
	whose stabilisers are open and compact.
\end{theorem}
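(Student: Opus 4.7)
The plan is to build $X$ as the colimit of an increasing chain $X^{(0)}\subset X^{(1)}\subset \cdots$ of $G$-CW-complexes obtained by successively attaching equivariant cells to kill homotopy groups, analogously to the classical inductive construction of a classifying space for a discrete group.

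For the base case, van Dantzig's theorem provides a compact open subgroup $K_0\le G$; set $X^{(0)}\defq G/K_0$, a discrete $0$-dimensional $G$-CW-complex whose only stabilizer is $K_0$. For the inductive step, assume $X^{(n)}$ is an $(n-1)$-connected $n$-dimensional $G$-CW-complex whose cell stabilizers are open and compact. For each class $[\alpha]\in\pi_n(X^{(n)})$ pick a representative $\alpha: S^n\to X^{(n)}$. Since $S^n$ is compact and $X^{(n)}$ carries the CW topology, the image of $\alpha$ meets only finitely many open cells $e_1,\ldots, e_k$. Put
\[
    H_\alpha \defq \bigcap_{j=1}^k G_{e_j}.
\]
This is a finite intersection of open compact subgroups, hence itself open and compact. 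The defining condition on a $G$-CW-complex, that each $G_{e_j}$ fixes $e_j$ pointwise, then implies that $H_\alpha$ fixes the image of $\alpha$, and in particular $\alpha$ itself, pointwise. Consequently the map $\bar\alpha: G/H_\alpha \times S^n \to X^{(n)}$, $(gH_\alpha, s)\mapsto g\alpha(s)$, is well-defined and $G$-equivariant, and we attach the equivariant $(n+1)$-cell $G/H_\alpha\times D^{n+1}$ along it. Doing this for at least one representative out of each $G$-orbit of classes yields an $n$-connected $(n+1)$-dimensional $G$-CW-complex $X^{(n+1)}$ with open compact stabilizers. The case $n=0$ is entirely analogous: we attach $1$-cells between $x_0\defq eK_0$ and each remaining vertex to connect $X^{(0)}$, with $1$-cell stabilizers of the form $K_0\cap gK_0g^{-1}$, which are again open and compact.

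Setting $X\defq \bigcup_n X^{(n)}$, we obtain a $G$-CW-complex with open compact stabilizers. Since every compact subset of $X$ lies in some $X^{(N)}$, cellular approximation gives $\pi_i(X)=\pi_i(X^{(N)})=0$ for any $N>i$, so $X$ is weakly contractible, and being a CW-complex it is then contractible by Whitehead's theorem.

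The main obstacle is the inductive step, specifically the verification that the pointwise stabilizer $H_\alpha$ of the image of any attaching sphere is open and compact and actually fixes $\alpha$ as a map, so that the attaching map extends equivariantly on an orbit $G/H_\alpha$. This hinges on two features already in hand: the CW topology, which forces a compact sphere's image into finitely many cells, and the defining condition on a $G$-CW-complex that each cell stabilizer acts as the identity on its cell.
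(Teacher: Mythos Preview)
Your proposal is correct and carries out precisely the approach the paper indicates: the paper does not give a proof but only remarks that the result follows ``by attaching enough equivariant cells to increase connectivity'' and refers to~\cite[1.2]{lueck-classifying}. Your use of van Dantzig's theorem for the $0$-skeleton and the observation that a compact attaching sphere meets only finitely many open cells---so that the intersection of their stabilisers is still compact and open and fixes the attaching map pointwise---are exactly the ingredients that make this sketch rigorous.
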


More is true but not needed here:
Every totally disconnected group has a classifying space for
the family of compact-open subgroups.

What about the opposite case of a connected Lie group $G$?
There we lack geometric models to compute continuous cohomology, but we can use
the infinitesimal structure. The van-Est isomorphism relates continuous cohomology and
Lie algebra cohomology. We won't discuss it here and instead refer to~\cite{guichardet}.

\subsection{Definition and geometric interpretation of $\ell^2$-cohomology}\label{sub: l2 cohomology}
Let $G$ be a second countable, locally compact group. For convenience, we require that the
the Haar measure~$\mu$ is unimodular, i.e.~invariant under left and right translations. Unimodularity
becomes a necessary assumption in Section~\ref{sec: dimension}.

We consider continuous cohomology in the $G$-module $L^2(G)$ which consists
of measurable square-integrable functions on $G$ modulo null sets.
A left $G$-action on $L^2(G)$ is given by $(gf)(h)=f(g^{-1}h)$.

The $G$-module $L^2(G)$
also carries the right $G$-action $(fg)(h)=f(hg^{-1})$, which commutes with the left action.
In Section~\ref{sec: cohomology} we ignore this right action. It becomes important
in Sections~\ref{sec: dimension} and~\ref{sec: betti numbers} when we define $\ell^2$-Betti
numbers.

\begin{defn}
	We call $H^\ast(G,L^2(G))$ and $\bar H^\ast(G, L^2(G))$
	the \emph{$\ell^2$-cohomology} and the \emph{reduced $\ell^2$-cohomology} of $G$, respectively.
\end{defn}

Assume that $G$ possesses a geometric model~$X$.
By Theorem~\ref{thm: geometric model and cohomology} the $\ell^2$-cohomology of $G$
can be expressed as the cohomology of the $G$-invariants of the chain complex
$\hom_\bbC(C_\ast^{\CW}(X), L^2(G))$. In view of~\eqref{eq: geometric chain group},
this is a chain complexes of Hilbert spaces
\begin{equation}\label{eq: concrete chain group of geometric model}
	\hom_\bbC(C_n^{\CW}(X), L^2(G))^G\cong \bigoplus_{U\in\calF_n}C(G/U, L^2(G))^G\cong \bigoplus_{U\in\calF_n}L^2(G)^U
\end{equation}
with bounded differentials.

Let us rewrite this chain complex in a way so that the
group $G$ does not occur anymore:
As a (non-equivariant) cellular complex the $n$-th cellular chain group
$C_n^{\CW}(X)$ comes with a preferred basis $B_n$,
given by $n$-cells, which is unique
up to permutation and signs. We define the vector space
\[
	\ell^2C^n_{\CW}(X):=\bigl\{f: C_n^{\CW}(X)\to\bbC\mid \sum_{e\in B_n}|f(e)|^2<\infty\bigr\}\subset \hom_\bbC(C_n^{\CW}(X), \bbC)
\]
of $\ell^2$-cochains in the cellular cochains; it has the structure of a Hilbert
space with Hilbert basis $\{f_e\mid e\in B_n\}$ where $f_e(e')=1$ for $e'=e$ and
$f_e(e')=0$ for $e'\in B_n\bs\{e\}$. For general cellular complexes
the differentials
in the cellular cochain complex
are not bounded as operators, but
in the presence of a cocompact group action on skeleta they are.

\begin{defn}
	The \emph{(reduced) $\ell^2$-cohomology} of $X$ is defined as the (reduced) cohomology of
	$\ell^2C_{\CW}^\ast(X)$ and denoted by $\ell^2 H^\ast(X)$ or
	$\ell^2\bar H^\ast(X)$, respectively.
\end{defn}

\begin{prop}\label{prop: geometric description of l2 cohomology}
	The $\ell^2$-cohomology of $G$ and the $\ell^2$-cohomology of a geometric
	model of $G$ are isomorphic. Similarly for the reduced cohomology.
\end{prop}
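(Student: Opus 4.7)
The strategy is to assemble the identifications already prepared in the preceding subsections. Fix a geometric model $X$ for $G$. Applying Theorem~\ref{thm: geometric model and cohomology} with coefficient module $E=L^2(G)$ gives
\[
	H^\ast(G, L^2(G))\cong H^\ast\bigl(\hom_\bbC(C_\ast^{\CW}(X), L^2(G))^G\bigr),
\]
and by~\eqref{eq: concrete chain group of geometric model} the degree-$n$ term of the right-hand complex is $\bigoplus_{U\in\calF_n} L^2(G)^U$. On the other hand, partitioning the set of $n$-cells $B_n$ of $X$ into $G$-orbits produces a bijection $B_n\cong\bigsqcup_{U\in\calF_n}G/U$, and hence $\ell^2C^n_{\CW}(X)\cong\bigoplus_{U\in\calF_n}\ell^2(G/U)$.

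The heart of the argument is therefore to construct, for each compact-open $U\le G$, a Hilbert-space isomorphism $L^2(G)^U\cong\ell^2(G/U)$ compatibly with the cellular differentials. A left-$U$-invariant $L^2$-function on $G$ is constant on each set $Uh$, and since $\mu(U)<\infty$ it lies in $L^2$ if and only if its sequence of values over $U\bs G$ lies in $\ell^2$; restricting to a system of coset representatives thus gives an isomorphism $L^2(G)^U\cong\ell^2(U\bs G)$ (an isometry up to the scalar $\mu(U)^{1/2}$). Unimodularity of $G$ makes the inversion map $g\mapsto g^{-1}$ measure-preserving, which induces a bijection $U\bs G\cong G/U$ and hence the desired $\ell^2(U\bs G)\cong \ell^2(G/U)$. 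Compatibility with differentials is then automatic because both cochain complexes are built from the same cellular boundary on $C_\ast^{\CW}(X)$, and the identifications preserve the natural basis of cells.

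For the reduced version, one only needs to notice that each identification above is a continuous (in fact Hilbert-space) isomorphism, so it carries $\closure(\im d^{n-1})$ on one side to $\closure(\im d^{n-1})$ on the other, yielding $\bar H^\ast(G, L^2(G))\cong \ell^2\bar H^\ast(X)$. The main obstacle I see is purely bookkeeping in the middle step: the $G$-action on $\bbC[G/U]$, the left $U$-invariance defining $L^2(G)^U$, and the passage from $U\bs G$ to $G/U$ must all be tracked simultaneously, and unimodularity is essential for the last move. Nothing is conceptually deep, but one has to be careful that no action is silently reversed along the way.
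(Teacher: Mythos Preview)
Your argument is correct and, once all the pieces are composed, produces exactly the map the paper writes down in one line: send $f\in\hom_\bbC(C_n^{\CW}(X),L^2(G))^G$ to the $\ell^2$-cochain $e\mapsto\bigl(\text{essential value of }f(e)\vert_{\operatorname{stab}(e)}\bigr)$. The paper's formulation has two advantages over your orbit-by-orbit decomposition. First, the chain-map property is immediate: since every stabilizer contains $1_G$, this is just ``evaluate $f(e)$ at the identity,'' and evaluation at a point manifestly commutes with the cellular coboundary. In your route the compatibility with differentials has to be checked across the inversion step, which is where your ``automatic'' is doing real work. Second, the paper's map is defined uniformly for all cells at once, so no choice of orbit representatives or coset representatives enters.

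One small correction: unimodularity is not what makes $U\bs G\cong G/U$ work here. Both are discrete sets (because $U$ is open), the map $Ug\mapsto g^{-1}U$ is always a bijection, and on $\ell^2$ of a discrete set with counting measure that is all you need. Unimodularity is a standing hypothesis in this section for other reasons, but it plays no role in this particular isomorphism.
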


\begin{proof}
	The maps
	\[
		\hom_\bbC(C_n^{\CW}(X), L^2(G))^G\to \ell^2C^n_{\CW}(X).
	\]
	that take $f$ to the $\ell^2$-cochain that assigns to $e\in B_n$ the
	essential value of the essentially constant function $f(e)\vert_{U}$, where
	$U<G$ is the open stabiliser of $e$,
	form a chain isomorphism. The claim follows now from
	Theorem~\ref{thm: geometric model and cohomology}.
\end{proof}

\subsection{Reduced $\ell^2$-cohomology and harmonic cochains}\label{subsub: harmonic}

\begin{definition}
	Let $W^0\xrightarrow{d^0}W^1\xrightarrow{d^1}W^2\ldots$
	be a cochain complex of Hilbert spaces such that the differentials are
	bounded operators. The \emph{Laplace-operator} in degree~$n$ is
	the bounded operator
	$\Delta^n=(d^n)^\ast\circ d^n+d^{n-1}\circ (d^{n-1})^\ast: W^n\to W^n$.
	Here $\ast$ means the adjoint operator.
	A cochain $c\in W^n$ is \emph{harmonic} if $\Delta^n(c)=0$.
\end{definition}

\begin{prop}\label{prop: harmonic cochains}
	Every harmonic cochain is a cocycle, and inclusion induces
	a topological isomorphism $\ker(\Delta^n)\xrightarrow{\cong} \bar H^n(W^\ast)=\ker(d^n)/\closure(\im(d^{n-1}))$.
\end{prop}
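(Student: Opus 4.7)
The plan is to establish a Hodge-type orthogonal decomposition of $W^n$ and to identify the harmonic cochains with the complement of the closed coboundaries inside the cocycles.

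First I would verify the easy direction and set up the key identity: for every $c\in W^n$,
\[
	\langle \Delta^n c,c\rangle = \langle d^n c,d^n c\rangle + \langle (d^{n-1})^\ast c,(d^{n-1})^\ast c\rangle = \norm{d^n c}^2 + \norm{(d^{n-1})^\ast c}^2.
\]
Hence $\Delta^n c=0$ iff $d^n c=0$ and $(d^{n-1})^\ast c=0$. This shows that every harmonic cochain is a cocycle, which is the first half of the statement.

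Next I would exploit that for any bounded operator $T$ between Hilbert spaces one has $\ker(T^\ast)=\im(T)^\perp=\closure(\im(T))^\perp$. Applied to $T=d^{n-1}$ this gives $\ker((d^{n-1})^\ast)=\closure(\im(d^{n-1}))^\perp$, so the identification of the previous step reads
\[
	\ker(\Delta^n) = \ker(d^n)\cap \closure(\im(d^{n-1}))^\perp.
\]
Since $d^n$ is bounded, $\ker(d^n)$ is closed; and since $d^n\circ d^{n-1}=0$, we have $\closure(\im(d^{n-1}))\subset\ker(d^n)$. Decomposing the Hilbert space $\ker(d^n)$ orthogonally with respect to the closed subspace $\closure(\im(d^{n-1}))$ therefore yields
\[
	\ker(d^n) = \closure(\im(d^{n-1})) \oplus \bigl(\ker(d^n)\cap\closure(\im(d^{n-1}))^\perp\bigr) = \closure(\im(d^{n-1}))\oplus\ker(\Delta^n).
\]

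From this orthogonal decomposition the map in question, namely the composition of the inclusion $\ker(\Delta^n)\hookrightarrow\ker(d^n)$ with the quotient map $\ker(d^n)\to\ker(d^n)/\closure(\im(d^{n-1}))=\bar H^n(W^\ast)$, is immediately seen to be a continuous linear bijection; its inverse is the orthogonal projection $\ker(d^n)\to\ker(\Delta^n)$ composed with a (well defined) section, which is likewise continuous. This gives the claimed topological isomorphism.

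I do not foresee serious obstacles; the only point that requires care is making sure one uses $\closure(\im(d^{n-1}))$ rather than $\im(d^{n-1})$ when taking orthogonal complements, so that the identification $\ker((d^{n-1})^\ast)=\closure(\im(d^{n-1}))^\perp$ is applied correctly. The \emph{topological} nature of the isomorphism follows from the open mapping theorem once bijectivity and continuity are in place, since all spaces involved are Hilbert (hence Banach).
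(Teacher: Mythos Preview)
Your proof is correct and follows essentially the same route as the paper: both compute $\langle \Delta^n c,c\rangle=\norm{d^n c}^2+\norm{(d^{n-1})^\ast c}^2$ to identify $\ker(\Delta^n)=\ker(d^n)\cap\ker((d^{n-1})^\ast)$, then use $\ker((d^{n-1})^\ast)=\closure(\im(d^{n-1}))^\perp$ to obtain an orthogonal Hodge decomposition. The only cosmetic difference is that the paper writes out the full three-term decomposition of $W^n$, whereas you decompose only $\ker(d^n)$---which is all that is needed.
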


\begin{proof}
	Since $\Delta^n$ is a positive operator, we have $c\in\ker(\Delta^n)$ if and only
	if \[\langle \Delta^n(c), c\rangle=\norm{d^nc}^2+\norm{(d^{n-1})^\ast(c)}^2=0.\]
	Hence $\ker(\Delta^n)=\ker(d^n)\cap\ker((d^{n-1})^\ast)$. The second statement
	follows from
	\begin{align*}
		W^p &= (\ker(d^n)\cap \im(d^{n-1})^\perp)\oplus \closure(\im(d^{n-1}))\oplus \ker(d^n)^\perp\\
			&= (\ker(d^n)\cap \ker((d^{n-1})^\ast))\oplus  \closure(\im(d^{n-1}))\oplus \ker(d^n)^\perp\\
			&= \ker(\Delta^n)\oplus  \closure(\im(d^{n-1}))\oplus \ker(d^n)^\perp.\qedhere
	\end{align*}

\end{proof}

As a consequence of Propositions~\ref{prop: geometric description of l2 cohomology}
and~\ref{prop: harmonic cochains}, we obtain:

\begin{corollary}
	Let $X$ be a geometric model of~$G$. The space of harmonic $n$-cochains of
	$\ell^2 C^\ast_{\CW}(X)$ is isomorphic to $\bar H^n(G, L^2(G))$.
\end{corollary}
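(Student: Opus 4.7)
The plan is simply to chain the two preceding propositions, so the proof should be essentially one line of bookkeeping. First I would note that the cochain complex $\ell^2 C^\ast_{\CW}(X)$ is a complex of Hilbert spaces with bounded differentials: the cocompactness of the $G$-action on each skeleton $X^{(n)}$ means that, up to bounded error, the cellular differential is described by finitely many bounded operators between the summands $L^2(G)^U$ appearing in~\eqref{eq: concrete chain group of geometric model}, so Proposition~\ref{prop: harmonic cochains} applies.

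Then by Proposition~\ref{prop: harmonic cochains}, inclusion of harmonic cochains induces a topological isomorphism
\[
\ker(\Delta^n)\xrightarrow{\cong} \ell^2\bar H^n(X).
\]
On the other hand, Proposition~\ref{prop: geometric description of l2 cohomology} (applied in its reduced variant) gives a topological isomorphism
\[
\ell^2\bar H^n(X)\cong \bar H^n(G,L^2(G)).
\]
Composing the two yields the desired identification of the space of harmonic $n$-cochains of $\ell^2 C^\ast_{\CW}(X)$ with $\bar H^n(G, L^2(G))$.

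There is no real obstacle here; the only thing to be slightly careful about is that Proposition~\ref{prop: geometric description of l2 cohomology} is stated as an isomorphism and one needs the isomorphism to be topological (i.e.\ to descend to the reduced cohomology). This is clear from the construction in the proof of that proposition, since the chain isomorphism given there is built levelwise out of the bounded identifications $C(G/U,L^2(G))^G\cong L^2(G)^U$ and hence is an isometry on each cochain group, so it respects closures of images of differentials. Once this is observed, the corollary follows immediately.
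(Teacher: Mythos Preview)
Your proposal is correct and follows exactly the approach the paper takes: the corollary is stated in the paper simply as a consequence of Propositions~\ref{prop: geometric description of l2 cohomology} and~\ref{prop: harmonic cochains}, with no further argument given. Your added remarks about boundedness of the differentials and about the chain isomorphism being an isometry (hence descending to reduced cohomology) are welcome clarifications of points the paper leaves implicit.
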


\begin{example}\label{exa: free group}
The $4$-regular tree is a geometric
model of the free group~$F$ of rank two.
Let $d: \ell^2C^0_{\CW}(X)\to \ell^2C^1_{\CW}(X)$ be the
differential. Since there are no $2$-cells, the first Laplace operator
$\Delta^1$ is just $dd^\ast$. A $1$-cochain is harmonic if and only if it is
in the kernel of $d^\ast$.
We choose a basis $B$ of $C_1^{\CW}(X)$ by
orienting $1$-cells in the following way:
				\begin{center}\begin{tikzpicture}[scale=0.7]
					\draw[white] (0,5) -- (-5,0) -- (0,-5) -- (5,0) -- cycle;
					\draw (0,5) -- (0,-5);
					\draw (-5,0) -- (5,0);
					\draw (-2,3) -- (2,3);
					\draw (-3,-2) -- (-3,2);
					\draw (-2,-3) -- (2,-3);
					\draw (3,-2) -- (3,2);
					\draw (-3.5,1.5) -- (-2.5,1.5);
					\draw (-4.5,0.5) -- (-4.5,-0.5);
					\draw (-3.5,-1.5) -- (-2.5,-1.5);
					\draw (-0.5,-4.5) -- (0.5,-4.5);
					\draw (-1.5,-2.5) -- (-1.5,-3.5);
					\draw (1.5,-2.5) -- (1.5,-3.5);
					\draw (2.5,-1.5) -- (3.5,-1.5);
					\draw (4.5,0.5) -- (4.5,-0.5);
					\draw (2.5,1.5) -- (3.5,1.5);
					\draw (-1.5,3.5) -- (-1.5,2.5);
					\draw (-0.5,4.5) -- (0.5,4.5);
					\draw (1.5,3.5) -- (1.5,2.5);

					\draw[->] (-3,0.7) -- (-3,0.8) node[left]{$0$};
					\draw[->] (-3,-0.8) -- (-3,-0.7) node[right]{$1/4$};

					\draw[->] (-3.9, 0) -- (-3.8,0) node[below]{$1/4$};
					\draw[->] (1.4,0) -- (1.5,0) node[below]{$1/4$};
					\draw[->] (-1.6,0) -- (-1.5,0) node[below]{$1/2$};
					\draw[->] (0,1.4) -- (0,1.5) node[right]{$1/4$};
					\draw[->] (0,-1.6) -- (0,-1.5) node[right]{$0$};
					\draw[->] (0,3.7) -- (0,3.8) node[right]{$1/8$};
					\draw[->] (0.7,3) -- (0.8,3) node[below]{$1/8$};
					\draw[->] (-0.9,3) -- (-0.8,3) node[below]{$0$};
					\draw[->] (3,0.7) -- (3,0.8) node[left]{$1/8$};
					\draw[->] (3,-0.9) -- (3,-0.8) node[left]{$0$};
					\draw[->] (3.7,0) -- (3.8,0) node[below]{$1/8$};
					\filldraw (-3,1.5) circle (2pt);
					\filldraw (0,-4.5) circle (2pt);
					\filldraw (1.5,-3) circle (2pt);
					\filldraw (-1.5,-3) circle (2pt);
					\filldraw (0,-3) circle (2pt);
					\filldraw (1.5,3) circle (2pt);
					\filldraw (-1.5,3) circle (2pt);
					\filldraw (0,4.5) circle (2pt);
					\filldraw (0,3) circle (2pt);
					\filldraw (-3,-1.5) circle (2pt);
					\filldraw (-4.5,0) circle (2pt);
					\filldraw (-3,0) circle (2pt);
					\filldraw (3,-1.5) circle (2pt);
					\filldraw (3,1.5) circle (2pt);
					\filldraw (0,0) circle (2pt);
					\filldraw (4.5,0) circle (2pt);
					\filldraw (3,0) circle (2pt);
				\end{tikzpicture}\end{center}
For $e\in B$ let $e(-)$ be the
starting point and $e(+)$ the end point of~$e$. Then:
\begin{align*}
	d(f)(e)&=f(e(+))-f(e(-))\\
	d^\ast(g)(v)&=\sum_{e(+)=v}g(e)-\sum_{e(-)=v}g(e)
\end{align*}
In the second equation the sums run over all edges whose starting or end point
is~$v$. The picture indicates a non-vanishing $c\in \ell^2C^1_{\CW}(X)$
with $d^\ast(c)=0$, thus $\Delta^1(c)=0$. Therefore $\bar H^1(F, \ell^2(F))\ne 0$.
\end{example}

\section{von-Neumann algebras, trace and dimension}\label{sec: dimension}

Throughout, we fix a second countable, locally compact group~$G$ with
left invariant Haar measure~$\mu$. We require that $G$ is unimodular, that is, $\mu$
is also right invariant.

\subsection{The von Neumann algebra of a locally compact group} 
\label{sub:the_von_neumann_algebra_of_a_locally_compact_group}
The continuous functions with compact support $C_c(G)$ form a $\bbC$-algebra with involution through
the convolution product
\[
	(f\ast g)(s)=\int_G f(t)g(t^{-1}s)d\mu(t)
\]
and the involution $f^\ast(s)=\overline{f(s^{-1})}$.
Taking convolution with $f\in C_c(G)$ is still defined for a function $\phi\in L^2(G)$.
It follows from the integral Minkowski inequality that
\[
	\norm{f\ast \phi}_2\le \norm{f}_1\norm{\phi}_2,
\]
where $\norm{\_}_p$ denotes the $L^p$-Norm.
We obtain a $\ast$-homomorphism into the algebra of bounded operators on $L^2(G)$:
\[
	\lambda: C_c(G)\to \calB(L^2(G)),~\lambda(f)(\phi)= f\ast \phi
\]
Similarly, we obtain a $\ast$-anti-homomorphism
\[
	\rho: C_c(G)\to  \calB(L^2(G)),~\rho(f)(\phi)=\phi\ast f.
\]

\begin{defn}
	A \emph{von-Neumann algebra} is a subalgebra of the bounded operators of
	a Hilbert space that is closed in the weak operator topology and closed under taking adjoints. The \emph{von-Neumann algebra $L(G)$ of the group $G$} is the von-Neumann algebra defined as the weak closure
	of $\im(\lambda)$.
\end{defn}

The weak closure $R(G)$ of $\im(\rho)$ in $\calB(L^2(G))$ is the commutant of $L(G)$ inside
$\calB(L^2(G))$.

\begin{remark}
	Consider the operator $u_g\in \calB(L^2(G))$ such that
	\[
		u_g(\phi)(s)=\phi(g^{-1}s).
	\]
	Similarly, we define $r_g(\phi)(s)=\phi(sg)$.
	We claim that $u_g\in L(G)$.
	If $G$ is discrete, then the Kronecker function $\delta_g$ on $G$ is
	continuous, so $\delta_g\in C_c(G)$, and we have $u_g:=\lambda(\delta_g)\in L(G)$. If $G$ is not discrete,
	we can choose a sequence $f_n\in C_c(G)$ of positive functions with $\norm{f_n}_1=1$ whose supports tend to $g$.
	Then $(\lambda(f_n))_{n\in\bbN}$ converges strongly, thus weakly, to $u_g$, implying $u_g\in L(G)$.
	One can also show that $L(G)$ is the weak closure of the span of $\{u_g\mid g\in G\}$.
	Similary it follows that $r_g\in R(G)$
\end{remark}

\begin{remark}\label{rem: anti-isomorphism}
Let $j: L^2(G)\to L^2(G)$ be the conjugate linear isometry
with \[j(\phi)(s)=\phi^\ast(s)=\overline{\phi(s^{-1})}.\]
Then
\[
	J: L(G)\to R(G),~ J(T)=j\circ T^\ast\circ j
\]
is a $\ast$-anti-isomorphism, that is, $J(T^\ast)=J(T)^\ast$ and $J(T\circ S)=J(S)\circ J(T)$.
Furthermore, we have $J(\lambda_g)=r_g$ and
$J(\lambda(f))=J(\rho(f))$ for $f\in C_c(G)$.
\end{remark}
\begin{defn}[$L(G)$-module structures]
	The Hilbert space $L^2(G)$ is naturally a left $L(G)$-module via $T\cdot \phi=T(\phi)$ for
	$T\in L(G)$ and $\phi\in L^2(G)$.
	The left $L(G)$-module structure restricts to the left $G$-module structure via $g\cdot \phi:=u_g(\phi)$.
    The Hilbert space $L^2(G)$ becomes a right $L(G)$-module by the anti-isomorphism~$J$, explicitly
	$\phi\cdot T:=J(T)(\phi)$.
	In the sequel, $L^2(G)$ will be regarded as a bimodule endowed
	with the left $G$-module structure and the right $L(G)$-module structure.
\end{defn}

\subsection{Trace}\label{sub: trace}
We explain the semifinite trace on the von Neumann algebra of a locally compact group.
This a bit technical, but indispensable for the dimension theory.
We start by discussing traces on an arbitrary von-Neumann algebra~$\calA$.

Let $\calA_+$
be the subset of positive operators in $\calA$. For $S,T\in\calA_+$
one defines $S\le T$ by $T-S\in\calA_+$; it is a partial order on $\calA_+$.
Every bounded totally ordered subset of $\calA_+$ has a supremum in $\calA_+$.

\begin{definition}\label{def: trace}
	A \emph{trace} on a von Neumann algebra $\calA$ is a function $\tau: \calA_+\to [0,\infty]$ such that
	\begin{enumerate}
		\item $\tau(S)+\tau(T)=\tau(S+T)$ for $S,T\in\calA_+$;
		\item $\tau(\lambda S)=\lambda\tau(S)$ for $S\in\calA_+$ and $\lambda\ge 0$;
		\item $\tau(SS^\ast)=\tau(S^\ast S)$ for $S\in\calA$.
	\end{enumerate}
	Let $\calA_+^\tau=\{S\in\calA_+\mid \tau(S)<\infty\}$.
    A trace $\tau$ is \emph{faithful} if $\tau(T)>0$ for every $T\in\calA_+\bs\{0\}$.
	It is \emph{finite} if $\calA_+^\tau=\calA_+$.
	It is \emph{semifinite} if $\calA_+^\tau$ is weakly dense in $\calA_+$.
	It is \emph{normal} if the supremum of traces of a bounded totally ordered subset in $\calA_+$
	is the trace of the supremum.
\end{definition}

If $\calA\subset\calB(H)$ is a von-Neumann algebra with trace~$\tau$, then
the $n\times n$-matrices $M_n(\calA)\subset\calB(H^n)$ are a von-Neumann algebra
with trace \[(\tau\otimes\id_n)(S):=\tau(S_{11})\ldots+\tau(S_{nn}).\]

\begin{remark}\label{rem: finite traces}
	Let $\calA^\tau$ be the linear span of $\calA_+^\tau$. It is clear that $\tau$
	extends linearly to $\calA^\tau$. If $\tau$ is finite, we have $\calA^\tau=\calA$, so $\tau$
	is defined on all of $\calA$. Further, $\calA^\tau$ is always an ideal in $\calA$~\cite[p.~318]{takesaki}.
	The \emph{trace property}
	\[
		\tau(ST)=\tau(TS)\text{ for all $S\in \calA^\tau$ and $T\in\calA$.}
	\]
	holds true. Its deduction from the third property in
	Definition~\ref{def: trace} takes a few lines and uses polarisation identities.
	See~\cite[Lemma~2.16 on p.~318]{takesaki}.
\end{remark}

After this general discussion we turn again to the von-Neumann algebra of~$G$. We call $\phi\in L^2(G)$ \emph{left bounded} if there
is a bounded operator, denoted by $\lambda_\phi$ on $L^2(G)$ such that
$\lambda_\phi(f)=\phi\ast f$ for every $f\in C_c(G)$. Of course, every element
$f\in C_c(G)$ is left bounded and $\lambda_f=\lambda(f)$.
Define for an element $S^\ast S\in L(G)_+$ (every positive
operator can be written like this):
\begin{equation}\label{eq: trace on locally compact group}
	\tau_{(G,\mu)}(S^\ast S)=\begin{cases}
		\norm{\phi}_2^2 & \text{ if there is a left bounded $\phi\in L^2(G)$ with $\lambda_\phi=S$;}\\
		\infty & \text{otherwise.}
	\end{cases}
\end{equation}

\begin{notation}
	The Haar measure is only unique up to scaling. Hence we keep $\mu$ in the
	notation~$\tau_{(G,\mu)}$. If $G=\Gamma$ is discrete, we always take the counting measure as
	Haar measure and simply write $\tau_\Gamma$.
\end{notation}

See~\cite[7.2.7~Theorem]{pedersen-book} for a proof of the following fact.

\begin{theorem}
	$\tau_{(G,\mu)}$ is a faithful normal semifinite trace on $L(G)$.
\end{theorem}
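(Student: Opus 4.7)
My plan is to reduce each assertion to an elementary $L^2$-identity for left bounded vectors in $L^2(G)$, with the unimodularity of $G$ entering exactly once: in the verification of the trace property. The underlying injectivity on which everything rests is that the map $\phi\mapsto\lambda_\phi$, defined on left bounded vectors, is injective: if $\lambda_\phi=0$ then $\phi\ast f=0$ for every $f\in C_c(G)$, and evaluating against an approximate identity recovers $\phi=0$. This injectivity, together with uniqueness of the positive square root in $L(G)$, makes the prescription $\tau_{(G,\mu)}(S^\ast S)=\|\phi\|_2^2$ unambiguous. Positive homogeneity (axiom (2) in Definition~\ref{def: trace}) is then immediate from $\lambda_{c\phi}=c\lambda_\phi$, while additivity (axiom (1)) is more delicate: one proves it by the $2\times 2$-matrix trick inside $M_2(L(G))$, which reduces it to the trace property (axiom (3)) applied in the matrix algebra.

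The main obstacle is thus the trace property $\tau(SS^\ast)=\tau(S^\ast S)$. For a left bounded $\phi$ one verifies that the adjoint $\lambda_\phi^\ast$ is implemented by right convolution with the involution $\phi^\ast(s)=\overline{\phi(s^{-1})}$, in the sense that $\lambda_\phi^\ast \xi=\xi\ast\phi^\ast$ for $\xi$ in a dense subspace. The decisive identity is $\|\phi^\ast\|_2=\|\phi\|_2$, proved by the change of variables $s\mapsto s^{-1}$; this change is measure-preserving \emph{precisely because $G$ is unimodular} (otherwise the modular function appears as a Radon--Nikodym derivative, which would break the trace property). One then computes $\tau(\lambda_\phi\lambda_\phi^\ast)=\|\phi^\ast\|_2^2=\|\phi\|_2^2=\tau(\lambda_\phi^\ast\lambda_\phi)$, and extends to general $S\in L(G)$ by polar decomposition together with lower semicontinuity of $\tau$.

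The remaining three assertions come quickly once the trace is set up. Faithfulness is immediate: $\|\phi\|_2^2=0$ forces $\phi=0$, hence $\lambda_\phi^\ast\lambda_\phi=0$. For semifiniteness, every $f\in C_c(G)$ is left bounded with $\lambda_f=\lambda(f)$, so $\lambda(f)^\ast\lambda(f)$ has finite trace $\|f\|_2^2$; since $L(G)$ is by definition the weak closure of $\lambda(C_c(G))$, a Kaplansky-density argument shows that the positive cone of elements of finite $\tau$-value is weakly dense in $L(G)_+$. Normality is the subtlest of the three, but it follows from the fact that $\tau_{(G,\mu)}$ is exactly the canonical trace associated with the left Hilbert algebra structure on $C_c(G)\subset L^2(G)$; such traces are automatically normal by monotone convergence applied to the underlying quadratic forms.
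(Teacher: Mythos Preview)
The paper does not actually prove this theorem: it simply refers the reader to~\cite[7.2.7~Theorem]{pedersen-book}. So there is no ``paper's proof'' to compare against; what you have written is a sketch of the standard argument that Pedersen carries out via the Hilbert algebra structure on $C_c(G)$, and in that sense your route is the same one the paper points to.

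Your outline is largely sound, but there is one concrete error and one gap worth flagging. The error: the adjoint $\lambda_\phi^\ast$ is \emph{left} convolution by $\phi^\ast$, not right convolution. That is, $\lambda_\phi^\ast=\lambda_{\phi^\ast}$, so $\lambda_\phi^\ast\xi=\phi^\ast\ast\xi$, not $\xi\ast\phi^\ast$; right convolution by $\phi^\ast$ lies in $R(G)$, not $L(G)$, and the two disagree already for $G$ discrete and $\phi=\delta_g$. Fortunately your conclusion survives the correction: since $(\phi^\ast)^\ast=\phi$ one has $\lambda_\phi\lambda_\phi^\ast=(\lambda_{\phi^\ast})^\ast\lambda_{\phi^\ast}$, and unimodularity gives $\|\phi^\ast\|_2=\|\phi\|_2$, so the trace identity follows as you claim. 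The gap: your appeal to ``uniqueness of the positive square root'' does not by itself show that $\tau_{(G,\mu)}(S^\ast S)$ depends only on $S^\ast S$ and not on the particular $S$. What is needed is that if $S_1^\ast S_1=S_2^\ast S_2$ with $S_1=\lambda_{\phi_1}$, then $S_2=w\lambda_{\phi_1}=\lambda_{w\phi_1}$ for a partial isometry $w\in L(G)$ whose initial projection fixes $\phi_1$ (because $\phi_1$ lies in the closed range of $\lambda_{\phi_1}$ via an approximate identity), whence $\|\phi_2\|_2=\|w\phi_1\|_2=\|\phi_1\|_2$. Once you insert this step, the sketch is correct.
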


Let us try to obtain a better understanding of
Defintion~\eqref{eq: trace on locally compact group}.
To this end, we first consider the case that $G=\Gamma$ is a discrete group. Then
$\delta_e\in C_c(\Gamma)=\bbC[\Gamma]\subset L^2(\Gamma)$. For every $S\in L(G)$ it is
$S(\delta_e)\in L^2(\Gamma)$ and $S=\lambda_{S(\delta_e)}$. This implies that
every element in $L(\Gamma)_+$ has finite $\tau_\Gamma$-trace. Hence $\tau_\Gamma$ is finite.
Further, from
\[
	\tau_\Gamma(S^\ast S)=\norm{S(\delta_e)}_2^2=\langle S(\delta_e), S(\delta_e)\rangle_{L^2(\Gamma)}=\langle S^\ast S(\delta_e),\delta_e\rangle_{L^2(\Gamma)}
\]
we conclude and record:

\begin{remark}[Trace for discrete groups]
	If $G=\Gamma$ is discrete, then $\tau_\Gamma$ is finite and thus everywhere defined. For
	every $T\in L(\Gamma)$ we have $\tau_\Gamma(T)=\langle T(\delta_e),\delta_e\rangle_{L^2(\Gamma)}$.
\end{remark}

\begin{remark}[Trace for totally disconnected groups]
Let $G$ be totally disconnected. Then we have a decreasing neighborhood basis
$(K_n)$ by open-compact subgroups. Then $\frac{1}{\mu(K_n)}\lambda(\chi_{K_n})\in L(G)$
is a projection (see Example~\ref{exa: compact-open subgroup}).
Let $S\in L(G)$.
Then $\phi_n:=S(\chi_{K_n})\in L^2(G)$ is a left bounded
element such that $\lambda_{\phi_n}=S\circ\lambda(\chi_{K_n})$. From that
and~\eqref{eq: trace on locally compact group} it is easy to see that
\begin{equation*}
\tau_{(G,\mu)}(S^*S) = \lim_{n\to\infty} \frac{1}{\mu(K_n)^2} \cdot \lVert S(\chi_{K_n}) \rVert^2_2\in [0,\infty]
\end{equation*}
\end{remark}

Now back to general~$G$:

\begin{remark}[Trace for arbitrary groups]
Because of $\lambda_f=\lambda(f)$ for every $f\in C_c(G)$ we obtain
that $\tau_{(G,\mu)}(\lambda(f)^\ast\lambda(f))=\norm{f}^2_2$. One quickly verifies that the latter is
just evaluation at the unit element: $\norm{f}^2_2=(f^\ast f)(e)$. It turns out that
$C_c(G)\subset L(G)^{\tau_{(G,\mu)}}$, and the trace $\tau_{(G,\mu)}$ is evaluation at $e\in G$ on $C_c(G)$.
\end{remark}
\subsection{Dimension}

We explain first L\"uck's dimension function
over a von-Neumann algebra endowed with a finite trace.
We refer
to~\cite{lueck-book,lueck-dimension} for proofs. L\"uck's work was a
major advance in creating a general and algebraic theory of $\ell^2$-Betti numbers.
An alternative algebraic approach was developed
by Farber~\cite{farber}.
After that
we discuss Petersen's generalisation to von-Neumann algebras with semifinite
traces~\cite{petersen}.

\subsubsection{Finite traces}
Let $\tau$ be a finite normal faithful trace on a von-Neumann algebra~$\calA$.
According to Remark~\ref{rem: finite traces}, $\tau$ is a functional on $\calA$
which satisfies the trace property $\tau(ST)=\tau(TS)$ for $S,T\in\calA$.
We start by explaining the dimension for finitely generated projective right $\calA$-modules.
Let $P$ be such a module. This means
that $P$ is isomorphic to the image of left multiplication $l_M: \calA^n\to \calA^n$
with an idempotent matrix $M\in M_n(\calA)$. In general the sum $\sum_{i=1}^n M_{ii}$
of the diagonal entries of $M$ depends not only on~$P$ but on the specific choice of~$M$.
However, Hattori and Stallings observed that
its image in the quotient $\calA/[\calA, \calA]$ by the additive subgroup
generated by all commutators is independent of the choices~\cite[Chapter~IX]{brown}.
This is true for
an arbitrary ring~$\calA$. The trace $\tau$ is still defined on the
quotient (see Remark~\ref{rem: finite traces}).
Therefore one defines:

\begin{definition}
	The \emph{Hattori-Stallings rank} $\hs(P)\in \calA/[\calA,\calA]$
	of a finitely generated projective right $\calA$-module
	$P$ is defined as the image of $\sum_{i=1}^n M_{ii}$ in $\calA/[\calA,\calA]$
	for any idempotent matrix $M\in M_n(\calA)$ with $P\cong \im(l_M)$.
	The \emph{dimension} of~$P$ is defined as
	\[\dim_\tau(P)=\tau(\hs(P))=(\tau\otimes\id_n)(M)\in [0,\infty).\]
\end{definition}

Henceforth $\calA$-modules are understood to be right $\calA$-modules.

\begin{remark}
	Let $\ell^2(\calA,\tau)$ be the GNS-construction of $\calA$ with respect to~$\tau$,
	that is the completion of the pre-Hilbert space $\calA$ with inner product
	$\langle S,T\rangle_\tau=\tau(ST^\ast)$.
	By an observation of Kaplansky~\cite[Lemma~6.23 on p.~248]{lueck-book},
	every finitely generated projective
	$\calA$-module can be described by a projection matrix $M$, that is a
	matrix $M$ with $M^2=M$ \emph{and} $M^\ast=M$. Then $M$ yields a right
	Hilbert $\calA$-submodule of $\ell^2(\calA,\tau)^n$, namely the image of
	left multiplication $\ell^2(\calA,\tau)^n\to \ell^2(\calA,\tau)^n$ with $A$.
	And $\dim_\tau(M)$
	coincides with the von-Neumann dimension of this Hilbert $\calA$-submodule.
	See~\cite[Chapter~1]{lueck-book} for more information on Hilbert $\calA$-modules.
	The classic reference is~\cite{neumann}.
\end{remark}

The idea of how to extend $\dim_\tau$ to arbitrary modules is almost naive; the difficulty lies in
showing its properties.

\begin{definition}
	Let $M$ be an arbitrary $\calA$-module. Its \emph{dimension} is
	defined as
	\[
		\dim_\tau(M)=\sup\{\dim_\tau(P)\mid \text{$P\subset M$ fin.~gen.~projective submodule}\}\in [0,\infty].
	\]
\end{definition}

First of all, using the same notation $\dim_\tau$ as before requires a justification.
But indeed, the new definition coincides with the old one on finitely generated
projective modules. And so $\dim_\tau(\calA)=\tau(1_\calA)$ which we usually normalise
to be~$1$. The following two properties are important and, in the end, implied by
the additivity and normality of~$\tau$.

\begin{theorem}[Additivity]
	If $0\to M_1\to M_2\to M_3\to 0$ is a short exact sequence of $\calA$-modules, then
	$\dim_\tau(M_2)=\dim_\tau(M_1)+\dim_\tau(M_3)$. Here $\infty+a=a+\infty=\infty$ for $a\in [0,\infty]$ is understood.
\end{theorem}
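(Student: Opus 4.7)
The plan is to prove the two inequalities $\dim_\tau(M_2)\ge \dim_\tau(M_1)+\dim_\tau(M_3)$ and $\dim_\tau(M_2)\le \dim_\tau(M_1)+\dim_\tau(M_3)$ separately; the first is essentially immediate from the defining supremum, while the second requires genuine work.

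For the lower bound, I would pick finitely generated projective submodules $P_1\subseteq M_1$ and $P_3\subseteq M_3$. Since $P_3$ is projective and $M_2\twoheadrightarrow M_3$ is surjective, the inclusion $P_3\hookrightarrow M_3$ lifts to a section $s\colon P_3\to M_2$. Because $P_1\subseteq M_1=\ker(M_2\to M_3)$ while $s(P_3)$ projects isomorphically onto $P_3$, the submodule $P_1+s(P_3)\subseteq M_2$ is isomorphic to $P_1\oplus P_3$. Additivity of $\dim_\tau$ on a direct sum of finitely generated projectives is immediate from the definition via idempotent matrices (put the two idempotents in block-diagonal form), so $\dim_\tau(M_2)\ge \dim_\tau(P_1)+\dim_\tau(P_3)$. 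Taking suprema over $P_1$ and $P_3$ yields the inequality.

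For the upper bound, the plan is to reduce everything to an additivity statement for sequences $0\to K\to P\to Q\to 0$ with $P$ finitely generated projective. Given such a $P\subseteq M_2$, let $\pi\colon M_2\twoheadrightarrow M_3$ denote the quotient map and set $K\defq P\cap M_1$, $Q\defq \pi(P)$. Then $K\subseteq M_1$ and $Q\subseteq M_3$, so by the supremum definition $\dim_\tau(K)\le \dim_\tau(M_1)$ and $\dim_\tau(Q)\le \dim_\tau(M_3)$. Once additivity is established for $0\to K\to P\to Q\to 0$, this gives $\dim_\tau(P)\le \dim_\tau(M_1)+\dim_\tau(M_3)$, and taking the supremum over $P\subseteq M_2$ finishes the argument.

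The main obstacle is precisely this auxiliary additivity: for an arbitrary submodule $K$ of a finitely generated projective $P$, $\dim_\tau(P)=\dim_\tau(K)+\dim_\tau(P/K)$, noting that neither $K$ nor $P/K$ need be projective. The plan is to embed $P$ as a direct summand of $\calA^n$, reducing to the case $P=\calA^n$, and then pass to the Hilbert-module picture of the Remark after the definition of $\dim_\tau$: the closure $\bar K$ of $K$ inside $\ell^2(\calA,\tau)^n$ corresponds to an orthogonal projection $M\in M_n(\calA)$, for which $\dim_\tau(\bar K)=(\tau\otimes\id_n)(M)$ and $\dim_\tau(\calA^n/\bar K)=(\tau\otimes\id_n)(1-M)$, so additivity for the closed sequence is just $\tau(M)+\tau(1-M)=\tau(1)$. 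The delicate technical point, and the real heart of the argument, is to show that passing from $K$ to $\bar K$ does not change dimensions: one exhausts $K$ by finitely generated submodules, uses that each such submodule embeds in $K$ with the same dimension as its closure, and appeals to normality of the trace $\tau$ to conclude that $\bar K/K$ has zero dimension. This zero-dimensionality of the torsion quotient is where normality of $\tau_{(G,\mu)}$ enters essentially, and it is the main technical step one would have to carry out carefully.
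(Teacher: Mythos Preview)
The paper itself does not prove this theorem; it is stated without proof, with a reference to L\"uck's work~\cite{lueck-book,lueck-dimension}. Your outline is essentially the strategy found there, and the overall architecture---lower bound via lifting projectives, upper bound via the auxiliary additivity $\dim_\tau(P)=\dim_\tau(K)+\dim_\tau(P/K)$ for $P$ finitely generated projective, and the identification of the closure argument as the crux---is correct.

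Two points of imprecision are worth flagging. First, when you speak of the closure $\bar K$ ``inside $\ell^2(\calA,\tau)^n$'', bear in mind that $\dim_\tau$ is defined for $\calA$-modules, not Hilbert modules; what you actually want is the \emph{algebraic} closure $\bar K=M\cdot\calA^n\subset\calA^n$, where $M\in M_n(\calA)$ is the projection onto the Hilbert-space closure. This $\bar K$ is then a genuine direct summand of $\calA^n$, and the decomposition $\calA^n/K\cong(\bar K/K)\oplus(\calA^n/\bar K)$ is what lets you compare $\dim_\tau(\calA^n/K)$ with $\dim_\tau(\calA^n/\bar K)$ without circularly invoking additivity. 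Second, your description of the key step (``each such submodule embeds in $K$ with the same dimension as its closure'') is a bit tangled. The clean statement is: for $K$ \emph{finitely generated}, say $K=\im(f\colon\calA^m\to\calA^n)$, one uses the spectral projections $p_\epsilon$ of $|f|$ to produce finitely generated projective submodules $f(p_\epsilon\calA^m)\subset K$ whose dimensions converge, by normality of $\tau$, to the trace of the range projection of $f$, which equals $\dim_\tau(\bar K)$. This is precisely where normality enters, as you correctly note; the passage to arbitrary $K$ then goes via exhaustion by finitely generated submodules.
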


\begin{theorem}[Normality]
	Let $M$ be an $\calA$-module, and let $M$ be the union of an increasing sequence of $\calA$-submodules~$M_i$.
	Then $\dim_\tau (M)=\sup_{i\in\bbN}\dim_\tau(M_i)$
\end{theorem}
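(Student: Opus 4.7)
The plan is to prove both inequalities directly from the definition $\dim_\tau(N) = \sup\{\dim_\tau(P) \mid P \subset N \text{ fin.~gen.~projective}\}$, exploiting the fact that finite generation interacts beautifully with directed unions.

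For the inequality $\sup_i \dim_\tau(M_i) \le \dim_\tau(M)$ I would simply observe monotonicity of $\dim_\tau$ under inclusion. Any finitely generated projective $\calA$-submodule $P \subset M_i$ is, in particular, a finitely generated projective $\calA$-submodule of $M$, hence appears in the supremum defining $\dim_\tau(M)$. Therefore $\dim_\tau(M_i) \le \dim_\tau(M)$ for every $i$, and taking the supremum on the left gives one direction.

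For the reverse inequality $\dim_\tau(M) \le \sup_i \dim_\tau(M_i)$, let $P \subset M$ be an arbitrary finitely generated projective submodule, and pick generators $p_1, \ldots, p_k$ for $P$. Since $M = \bigcup_i M_i$ and the $M_i$ form an increasing chain, each $p_j$ lies in some $M_{i_j}$, and by choosing $i_0 = \max_j i_j$ all generators lie in $M_{i_0}$. Consequently $P \subset M_{i_0}$. Now the key point is that projectivity is an intrinsic property of the $\calA$-module $P$, not of its embedding: $P$ remains finitely generated projective when viewed as a submodule of $M_{i_0}$. Hence $\dim_\tau(P) \le \dim_\tau(M_{i_0}) \le \sup_i \dim_\tau(M_i)$. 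Taking the supremum over all admissible $P$ yields the desired inequality.

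There is no real obstacle here; the proof is essentially a bookkeeping exercise once the definition is in place. The only point that deserves a moment of attention is the intrinsic nature of ``finitely generated projective'', which ensures that the candidate submodule $P$ really does contribute to $\dim_\tau(M_{i_0})$ rather than merely to $\dim_\tau(M)$. Note that, in contrast to additivity, this proof does not invoke any deep property of $\tau$ (such as normality of $\tau$ itself) — the normality of $\dim_\tau$ on modules is, somewhat pleasantly, reduced to the set-theoretic fact that finite subsets of a directed union sit in a single term.
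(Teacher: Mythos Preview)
The paper does not actually supply a proof of Normality; it states Additivity and Normality as theorems and refers the reader to \cite{lueck-book,lueck-dimension} for the arguments. Your proof is correct and is the standard one: since $\dim_\tau$ is \emph{defined} as a supremum over finitely generated projective submodules, and every finitely generated submodule of an increasing union already sits inside a single $M_{i_0}$, both inequalities follow at once. Your closing remark is also on point: in contrast to Additivity, which genuinely uses analytic features of the trace (and is where the real work lies in L\"uck's theory), Normality of $\dim_\tau$ is a formal consequence of the shape of the definition and needs nothing beyond the consistency of the two definitions on finitely generated projectives, which the paper records just before the statement.
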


\begin{example}
The only drawback of $\dim_\tau$ -- in comparison to the dimension of vector spaces -- is that
$\dim_\tau(N)=0$ does not, in general, imply $N=0$.
Here is an example. Take a standard probability space $(X,\nu)$.
Then the $\nu$-integral is a finite trace $\tau$ on $\calA=L^\infty(X)$.
Let $X=\bigcup_{i=1}^n X_i$ be an increasing union of measurable sets such that $\nu(X_i)<1$
for every $i\in\bbN$. Each characteristic function $\chi_{X_i}$ is a projection in $\calA$
with trace $\nu(X_i)$. And each $\chi_{X_i}\calA=L^\infty(X_i)$ is a finitely generated projective module of dimension $\tau(\chi_{X_i})=\nu(X_i)$. Let $M\subset\calA$ be the increasing union of
submodules $\chi_{X_i}\calA$. Then $\calA/M\ne 0$ but $\dim_\tau (\calA/M)=\dim_\tau(\calA)-\dim_\tau(M)=0$ by additivity and normality.
\end{example}

\subsubsection{Semifinite traces}

Let $\calA$ be a von-Neumann algebra with a semifinite trace~$\tau$.
The definition of the dimension function only needs small modifications,
and the proof of being well-defined
and other properties run basically like the one in the finite
case~\cite[Appendix~B]{petersen-phd}.

We say that a projective $\calA$-module $P$ is
\emph{$\tau$-finite} if $P$ is finitely generated and if one, then any, representing projection matrix $M\in M_n(\calA)$,
$P\cong \im(l_M: \calA^n\to\calA^n)$, satisfies $(\tau\otimes\id_n)(M)<\infty$. The
\emph{dimension} $\dim_\tau(P)\in [0,\infty)$ of a $\tau$-finite projective $\calA$-module
$P\cong \im(l_M)$ is defined as $(\tau\otimes\id_n)(M)$.

\begin{definition}
		Let $M$ be an arbitrary $\calA$-module. Its \emph{dimension} is
		defined as
		\[
			\dim_\tau(M)=\sup\{\dim_\tau(P)\mid \text{$P\subset M$ $\tau$-finite projective submodule}\}\in [0,\infty].
		\]
\end{definition}

Similarly as before, the notation is consistent with the one for $\tau$-finite projective modules,
and additivity and normality hold true.

\begin{remark}
 While the situation in the semifinite case is quite similar to the finite case, there is one important difference: If $\tau$ is not finite, then $\dim_\tau(\calA)=\tau(1_\calA)=\infty$.
\end{remark}

\begin{notation}
	We write $\dim_{(G,\mu)}$ instead of $\dim_{\tau_{(G,\mu)}}$ and $\dim_\Gamma$ instead of
	$\dim_{\tau_\Gamma}$.
\end{notation}

\begin{example}\label{exa: compact-open subgroup}
	Let $K<G$ be an open-compact subgroup. Then the characteristic function $\chi_K$ is continuous and
	$P=\lambda(\frac{1}{\mu(K)}\chi_K)$ is a projection in $L(G)$. According to
	the remark at the end of Subsection~\ref{sub: trace}, we have $\tau_{(G,\mu)}(P)=1/\mu(K)$, thus
	the dimension of the projective $L(G)$-module $P\cdot L(G)$ is $1/\mu(K)$. The $L(G)$-module
	$P\cdot L^2(G)$ is not projective, but, by an argument involving rank completion,
	one can show~\cite[B.25~Proposition]{petersen-phd} that
	\begin{equation}\label{eq: rank completion dim}
		\dim_{(G,\mu)}(P\cdot L^2(G))=\dim_{(G,\mu)}(P\cdot L(G))=\frac{1}{\mu(K)}.
	\end{equation}
	One easily verifies that $P\cdot L^2(G)$ consists of all left $K$-invariant functions, so
	\[P\cdot L^2(G)=L^2(G)^K.\]
\end{example}

\section{$\ell^2$-Betti numbers of groups}\label{sec: betti numbers}

Throughout, $G$ denotes a second countable, locally compact, unimodular group.

\subsection{Definition}
In the definition of $\ell^2$-cohomology we only used the left $G$-module structure.
The right $L(G)$-module structure survives the process of taking $G$-invariants of the bar resolution, and so
$H^\ast(G,L^2(G))$ inherits a right $L(G)$-module structure from the one of $L^2(G)$. The following definition is due to Petersen for second countable, unimodular, locally compact~$G$. It is modelled after and coincides with L\"uck's definition for discrete~$G$.

\begin{definition}
	The $\ell^2$-Betti numbers of~$G$ with Haar measure~$\mu$
	are defined as
	\[
		\betti_p(G,\mu):=\dim_{(G,\mu)} \bigl(H^p(G,L^2(G))\bigr)\in [0,\infty].
	\]
\end{definition}

\begin{remark}
	In L\"uck's book~\cite{lueck-book}, where only the case of discrete~$G$ is discussed,
	the $\ell^2$-Betti numbers are defined as $\dim_G\bigl(H_p(G,L(G))\bigr)$.
	By~\cite[Theorem~2.2]{peterson+thom} L\"uck's definition coincides with the one above.
\end{remark}

The homological algebra in Section~\ref{sec: cohomology} can be carried
through such that the additional right $L(G)$-module structure is respected.
In particular, we obtain (see Theorem~\ref{thm: geometric model and cohomology}):

\begin{theorem}\label{thm: betti and geometric model}
	Let $G$ be totally disconnected and $X$ a geometric model of $G$.
	Then
	\[
		\betti_p(G,\mu)=\dim_{(G,\mu)}\bigl(H^p\bigl(\hom_\bbC(C_\ast^{\CW}(X), L^2(G))^G\bigr)\bigr).
	\]
\end{theorem}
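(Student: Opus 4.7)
The plan is to upgrade Theorem~\ref{thm: geometric model and cohomology} so that the isomorphism it provides is not merely a topological isomorphism of vector spaces but an isomorphism of right $L(G)$-modules, and then apply the dimension function $\dim_{(G,\mu)}$ to both sides. The conclusion then follows immediately from the definition of $\betti_p(G,\mu)$.

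First I would work in the enlarged category of $(G, L(G))$-bimodules: topological vector spaces carrying a continuous left $G$-action and a commuting right $L(G)$-action. The coefficient module $L^2(G)$ is such a bimodule. Both the homogeneous bar complex $C(G^{\ast+1}, L^2(G))$ and the cellular complex $\hom_\bbC(C_\ast^{\CW}(X), L^2(G))$ inherit bimodule structures from the right $L(G)$-action applied pointwise on values, e.g.~$(f\cdot T)(g_0,\ldots,g_n):=f(g_0,\ldots,g_n)\cdot T$. In both cases the differentials only touch the left $G$-arguments, so they are manifestly right $L(G)$-linear. Taking $G$-invariants is a functor to right $L(G)$-modules because the two actions commute, hence $H^p(G, L^2(G))$ and $H^p\bigl(\hom_\bbC(C_\ast^{\CW}(X), L^2(G))^G\bigr)$ are naturally right $L(G)$-modules.

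Next I would refine the notion of relative injectivity to the bimodule category by requiring the continuous linear splitting~$s$ that makes a $G$-morphism admissible to be right $L(G)$-linear as well. The extension formulas of Example~\ref{exa: relatively injective modules}, both
\[
(\bar f)(v)(g_0,\ldots,g_n)=f\bigl(g_0 s(g_0^{-1}v)\bigr)(g_0,\ldots,g_n)
\]
and the averaged version for $C((G/K)^{n+1}, L^2(G))$, involve only the left $G$-action, evaluation of $f$, and the splitting $s$; hence they are right $L(G)$-equivariant whenever $f$ and $s$ are. It follows that $C(G^{n+1}, L^2(G))$ and $\hom_\bbC(C_n^{\CW}(X), L^2(G))\cong \bigoplus_{U\in\calF_n} C(G/U, L^2(G))$ are relatively injective bimodules, and that the two complexes in play are bimodule resolutions of $L^2(G)$. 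Running the inductive construction behind the fundamental lemma (Theorem~\ref{thm: fundamental lemma}) in the bimodule category then produces a comparison chain map that is a bimodule morphism, and the chain homotopy uniqueness is also valid for bimodule maps. Passing to $G$-invariants and then to cohomology yields the desired right $L(G)$-module isomorphism, and applying $\dim_{(G,\mu)}$ gives
\[
\betti_p(G,\mu)=\dim_{(G,\mu)}\bigl(H^p\bigl(\hom_\bbC(C_\ast^{\CW}(X), L^2(G))^G\bigr)\bigr).
\]

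The main obstacle I expect is the verification that admissible bimodule inclusions really do admit right $L(G)$-linear splittings — abstract relative injectivity in the $G$-module category only guarantees a plain continuous linear splitting. For the specific relatively injective bimodules $C(G^{n+1}, L^2(G))$ and $\hom_\bbC(C_n^{\CW}(X), L^2(G))$ appearing in the two resolutions this is checked by inspection of the formulas above; the whole argument could be bypassed by instead observing that any two $G$-equivariant comparison chain maps between these resolutions are $G$-chain homotopic, and then arguing that the resulting isomorphism on cohomology intertwines the $L(G)$-actions because both originate from the identity on $L^2(G)$ at the augmentation level.
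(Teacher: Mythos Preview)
Your proposal is correct and follows essentially the same approach as the paper: the paper simply asserts that the homological algebra of Section~\ref{sec: cohomology} can be carried through so as to respect the right $L(G)$-module structure, and then invokes Theorem~\ref{thm: geometric model and cohomology}. You have unpacked precisely what this means, including the verification that the explicit extension formulas in Example~\ref{exa: relatively injective modules} are right $L(G)$-linear, and you have correctly anticipated and addressed the one genuine subtlety about splittings being $L(G)$-linear.
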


For totally disconnected groups we can compute the $\ell^2$-Betti numbers also through
the reduced cohomology~\cite[]{petersen-phd}:

\begin{theorem}\label{thm: betti and reduced}
	Let $G$ be totally disconnected. Then
	\[
		\betti_p(G,\mu)=\dim_{(G,\mu)} \bigl(\bar H^p(G,L^2(G))\bigr)
	\]
	If $X$ is a geometric model of~$G$, then
	\[
		\betti_p(G,\mu)=\dim_{(G,\mu)}\bigl(\bar H^p\bigl(\hom_\bbC(C_\ast^{\CW}(X), L^2(G))^G\bigr)\bigr).
	\]
\end{theorem}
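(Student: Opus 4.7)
The plan is to deduce both statements from the following black box: for any bounded $L(G)$-equivariant operator $T\colon \scrH\to\scrH'$ between Hilbert $L(G)$-modules one has $\dim_{(G,\mu)}\bigl(\overline{\im T}/\im T\bigr)=0$. In the finite-trace case this is L\"uck's classical theorem; in the semifinite setting it is Petersen's extension, whose essential input is spectral calculus applied to $T^\ast T$ together with a rank-completion argument in the spirit of Example~\ref{exa: compact-open subgroup}.

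Granting this, the second formula follows quickly when $G$ admits a geometric model $X$. By Theorem~\ref{thm: betti and geometric model} and~\eqref{eq: concrete chain group of geometric model}, the cochain complex
\[
C^n \defq \hom_\bbC(C_n^{\CW}(X), L^2(G))^G \cong \bigoplus_{U\in\calF_n} L^2(G)^U
\]
is a finite direct sum of Hilbert $L(G)$-modules in each degree (finite because $\calF_n$ is finite by cocompactness on skeleta), and its differentials $d^n$ are bounded and $L(G)$-equivariant. The short exact sequence of $L(G)$-modules
\[
0\longrightarrow \overline{\im d^{p-1}}/\im d^{p-1}\longrightarrow H^p(C^\ast)\longrightarrow \bar H^p(C^\ast)\longrightarrow 0,
\]
combined with additivity of $\dim_{(G,\mu)}$ and the black box, then gives the equality $\dim_{(G,\mu)} H^p(C^\ast)=\dim_{(G,\mu)} \bar H^p(C^\ast)$, which is the second formula and, via Proposition~\ref{prop: geometric description of l2 cohomology}, also the first one for such~$G$.

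For a general totally disconnected $G$ (without a geometric model), I would choose a contractible $G$-CW-complex $Y$ with open-compact stabilisers, whose existence is guaranteed by the theorem stated after Theorem~\ref{thm: geometric model and cohomology}, and exhaust it by an increasing sequence of cocompact $G$-subcomplexes $Y_\alpha\nearrow Y$. Each $Y_\alpha$ falls under the cocompact case already handled, and one would propagate the equality $\dim H^p=\dim\bar H^p$ from the $Y_\alpha$ to $Y$ by combining normality of $\dim_{(G,\mu)}$ over the directed system with the identification of $H^p(G, L^2(G))$ as the cohomology of the cochain complex $\hom_\bbC(C_\ast^{\CW}(Y), L^2(G))^G$ provided by the homological algebra of Section~\ref{sec: cohomology}.

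The main obstacle is the black-box vanishing of $\dim_{(G,\mu)}(\overline{\im T}/\im T)$ in the semifinite setting, which is where all the analytic content resides. A secondary technicality, arising only in the general totally disconnected case, is that closures of non-closed images need not be compatible with directed limits, so some care is needed when commuting closures with the colimit over the $Y_\alpha$.
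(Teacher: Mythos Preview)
The paper does not actually prove Theorem~\ref{thm: betti and reduced}; it merely states the result with a citation to Petersen's thesis~\cite{petersen-phd}. So there is no in-paper argument to compare against, and your proposal has to be judged on its own.

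Your argument for the second formula, when $G$ admits a geometric model~$X$, is correct and is exactly the expected route: the cochain groups~\eqref{eq: concrete chain group of geometric model} are genuine Hilbert $L(G)$-modules with bounded equivariant differentials, the short exact sequence you write down is valid, and the black box $\dim_{(G,\mu)}\bigl(\overline{\im d^{p-1}}/\im d^{p-1}\bigr)=0$ finishes it. Deducing the first formula in this case from the second is also fine, though the cleaner reference is Theorem~\ref{thm: fundamental lemma} (which asserts a \emph{topological} isomorphism, hence passes to reduced cohomology) rather than Proposition~\ref{prop: geometric description of l2 cohomology}.

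The general totally disconnected case, however, has a real gap. When you exhaust a contractible $G$-CW-complex $Y$ by cocompact subcomplexes $Y_\alpha$, the cellular chain complex satisfies $C_\ast^{\CW}(Y)=\varinjlim_\alpha C_\ast^{\CW}(Y_\alpha)$, so the cochain complex $\hom_\bbC(C_\ast^{\CW}(Y),L^2(G))^G$ is an \emph{inverse} limit of the finite-stage complexes, not a direct limit. Normality of $\dim_{(G,\mu)}$ concerns increasing unions and says nothing about inverse limits; neither cohomology nor closure of images behaves well under inverse limits either. So the mechanism you propose for propagating $\dim H^p=\dim\bar H^p$ from the $Y_\alpha$ to $Y$ does not work as stated. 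A separate issue is the first formula itself: the bar-resolution chain groups $C(G^{n+1},L^2(G))^G$ carry the compact-open topology and are not Hilbert $L(G)$-modules, so your black box does not apply to them directly; one needs either a more general vanishing statement for $\dim_{(G,\mu)}$ of ``closure modulo image'' in this topological-module setting, or a comparison argument that stays within Hilbert modules throughout. This is where the actual analytic work in Petersen's thesis lies.
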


Many properties of $\ell^2$-Betti numbers of discrete groups possess analogues for locally compact groups. We refer to~\cite{petersen-phd} for more information and discuss only the Euler-Poincare formula:

Let $X$ be a cocompact proper $G$-CW complex. Let $K_1,\ldots, K_n<G$
be the stabilisers of $G$-orbits of $p$-cells of~$X$.
The \emph{weighted number
of equivariant $p$-cells} of $X$ is then defined as
\[ c_p(X;G,\mu)=\mu(K_1)^{-1}+\cdots +\mu(K_n)^{-1}.\]

\begin{definition}
The \emph{equivariant Euler characteristic} of $X$ is defined as
\[ \chi(X;G,\mu):=\sum_{p\ge 0}(-1)^p c_p(X;G).\]
\end{definition}

\begin{theorem}[Euler-Poincare formula]\label{thm: euler poincare}
Let $G$ be totally disconnected, and let $X$ be a cocompact geometric
model of~$G$. Then
\[ \sum_{p\ge 0}(-1)^p\betti_p(G,\mu)=\chi(X;G,\mu).\]
\end{theorem}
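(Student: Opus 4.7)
My plan is to reduce the identity to the standard Euler characteristic argument for a finite cochain complex whose individual terms have easily computable $\dim_{(G,\mu)}$-dimensions.

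First I would apply Theorem~\ref{thm: betti and geometric model} to write
\[
	\betti_p(G,\mu)=\dim_{(G,\mu)}\bigl(H^p(C^\ast)\bigr),\qquad C^\ast \defq \hom_\bbC(C_\ast^{\CW}(X), L^2(G))^G.
\]
Since $X$ is cocompact, $X/G$ has only finitely many cells in total, so $C^\ast$ is a \emph{bounded} cochain complex and each $\calF_p$ is finite. Using~\eqref{eq: concrete chain group of geometric model} we identify
\[
	C^p\cong \bigoplus_{U\in\calF_p} L^2(G)^U,
\]
where $U$ ranges over the open-compact stabilisers of representatives of the finitely many $G$-orbits of $p$-cells.

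Next I would compute the dimension of each cochain group. By Example~\ref{exa: compact-open subgroup}, $\dim_{(G,\mu)}(L^2(G)^U)=1/\mu(U)$ for every open-compact $U<G$. Two conjugate open-compact subgroups have the same Haar measure (by unimodularity, or directly by translation invariance), so the sum depends only on the orbit and additivity of $\dim_{(G,\mu)}$ gives
\[
	\dim_{(G,\mu)}(C^p)=\sum_{U\in\calF_p}\frac{1}{\mu(U)}=c_p(X;G,\mu).
\]
In particular each $\dim_{(G,\mu)}(C^p)$ is finite.

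Finally I would run the classical Euler characteristic argument. From the short exact sequences $0\to B^p\to Z^p\to H^p(C^\ast)\to 0$ and $0\to Z^p\to C^p\to B^{p+1}\to 0$ together with additivity, one gets $\dim_{(G,\mu)}(H^p(C^\ast))\le\dim_{(G,\mu)}(C^p)<\infty$ and
\[
	\sum_{p\ge 0}(-1)^p\dim_{(G,\mu)}(H^p(C^\ast))=\sum_{p\ge 0}(-1)^p\dim_{(G,\mu)}(C^p).
\]
Combining with the previous two steps yields $\sum_p(-1)^p\betti_p(G,\mu)=\sum_p(-1)^pc_p(X;G,\mu)=\chi(X;G,\mu)$.

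I do not expect any serious obstacle: the only technical point is verifying that the telescoping identity for Euler characteristics goes through for $\dim_{(G,\mu)}$, which follows from additivity on short exact sequences, and the cocompactness hypothesis is used exactly to guarantee that every $\dim_{(G,\mu)}(C^p)$ is finite, ruling out any $\infty-\infty$ indeterminacies in the alternating sum.
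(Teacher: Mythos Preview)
Your proposal is correct and follows essentially the same approach as the paper's proof: both compute $\dim_{(G,\mu)}(C^p)=c_p(X;G,\mu)$ and then telescope via the two short exact sequences $0\to Z^p\to C^p\to B^{p+1}\to 0$ and $0\to B^p\to Z^p\to H^p\to 0$ using additivity of $\dim_{(G,\mu)}$. You are in fact a bit more explicit than the paper in justifying $\dim_{(G,\mu)}(C^p)=c_p(X;G,\mu)$ via~\eqref{eq: concrete chain group of geometric model} and Example~\ref{exa: compact-open subgroup}, and in noting that cocompactness ensures finiteness so no $\infty-\infty$ issues arise.
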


\begin{proof}
Let $C^\ast:= \hom_\bbC(C_\ast^{\CW}(X), L^2(G))^G$, and let $H^\ast$ be the cohomology of $C^\ast$.
Let $Z^p$ be the cocycles in $C^p$ and
$B^p$ be the coboundaries in $C^p$.
Note that $c_p(X;G,\mu)=\dim_{(G,\mu)}(C^p)$. We have exact sequences
$0\to Z^p\to C^p\to B^{p+1}\to 0$ and $0\to B^p\to Z^p\to H^p\to 0$
of $L(G)$-modules. By additivity of $\dim_{(G,\mu)}$ we conclude that
	\begin{align*}
		\chi(X;G,\mu) &= \sum_p (-1)^p\dim_{(G,\mu)}(C^p)\\
		 &= \sum_p (-1)^p(\dim_{(G,\mu)}(Z^p)+\dim_{(G,\mu)} (B^{p+1}))\\
		                                                 &= \sum_p (-1)^p(\dim_{(G,\mu)}(B^p)+\dim_{(G,\mu)}(H^p)+\dim_{(G,\mu)} (B^{p+1}))\\
		                                                 &= \sum_p (-1)^p\betti_p(G,\mu).\qedhere
		                                             \end{align*}
\end{proof}

\begin{remark}\label{rem: vanishing of betti}
	It turns out that
	\[\betti_p(G,\mu)>0 \Leftrightarrow \bar H^p\bigl(\hom_\bbC(C_\ast^{\CW}(X), L^2(G))^G\bigr)\ne 0.\]
	In general, this is false for non-reduced continuous cohomology. So constructing non-vanishing harmonic cocycles is a
	way to show non-vanishing of $\ell^2$-Betti numbers (see Subsection~\ref{subsub: harmonic}).
	Example~\ref{exa: free group} shows that the first $\ell^2$-Betti number of a non-abelian free
	group is strictly positive.
\end{remark}

\subsection{Quasi-isometric and coarse invariance}\label{sub: qi}
\label{sub:the_coarse_geometric_aspect}

Every locally compact, second countable group $G$ possesses a left-invariant proper continuous metric~\cite{struble}, and any two left-invariant proper continuous metrics on $G$ are coarsely equivalent. Thus  
$G$ comes with a well defined coarse geometry. If $G$ is compactly generated, then 
the word metric associated to a symmetric compact generating set is coarsely 
equivalent to any left-invariant proper continuous metric. Finally, any coarse equivalence between compactly generated second countable groups is a quasi-isometry with respect to word metrics of compact symmetric generating sets. A recommended background reference for these notions is~\cite{cornulier-book}*{Chapter~4}.  

As discussed in the introduction, neither the exact values nor the proportionality of $\ell^2$-Betti numbers are coarse invariants. But the vanishing of $\ell^2$-Betti numbers is a coarse invariant. The following result, which was proved by Pansu~\cite{pansu} for discrete groups admitting a classifying space of finite type, by Mimura-Ozawa-Sako-Suzuki~\cite{taka}*{Corollary~6.3} for all countable discrete groups, and by Schr\"odl and the author in full generality~\cite{sauer+schroedl}, has a long history; 
we refer to the introduction in~\cite{sauer+schroedl} for an historical overview. 

\begin{theorem}
The vanishing of the $n$-th $\ell^2$-Betti number of a unimodular, second countable, locally compact group is an invariant of coarse equivalence. 
\end{theorem}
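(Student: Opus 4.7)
My plan follows the Pansu-type strategy adapted to the locally compact setting. The key is to introduce an auxiliary invariant --- a uniform $\ell^2$-cohomology $\ell^2 H^\ast_u(G)$ --- that is manifestly a coarse invariant and whose degree-$n$ vanishing is equivalent to $\betti_n(G,\mu) = 0$. A coarse equivalence $f \colon G \to H$ then produces the chain of equivalences
$$\betti_n(G,\mu) = 0 \iff \ell^2 H^n_u(G) = 0 \iff \ell^2 H^n_u(H) = 0 \iff \betti_n(H,\mu) = 0.$$

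First I would build the uniform coarse models. Fix a left-invariant proper continuous metric on $G$, whose existence and coarse uniqueness were recalled just before the theorem. For compactly generated $G$ one forms, at each scale $R > 0$, a Rips-type complex $X_R$ with vertex set the orbit of a compact-open subgroup; this is a locally finite $G$-CW complex with compact-open stabilisers and cocompact skeleta, highly connected for large $R$. Set
$$\ell^2 H^n_u(G) := \varinjlim_{R} \bar H^n\Bigl(\hom_\bbC\bigl(C_\ast^{\CW}(X_R), L^2(G)\bigr)^G\Bigr),$$
with transition maps induced by the natural inclusions $X_R \hookrightarrow X_{R'}$ for $R \le R'$. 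The non-compactly-generated case is reduced to the compactly generated one by an exhaustion argument, combined with a reduction modulo compact normal subgroups to pass to the totally disconnected setting; normality and additivity of $\dim_{(G,\mu)}$ control the limits.

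Next I would verify coarse invariance. A coarse equivalence $f \colon G \to H$ with prescribed control induces, at each scale $R$, a simplicial map $X_R^G \to X_{R'}^H$ where $R'$ depends only on the control of $f$; a coarse inverse provides a chain-homotopy inverse at a still larger scale. Passing to the direct limits, these assemble into mutually inverse isomorphisms $\ell^2 H^\ast_u(G) \cong \ell^2 H^\ast_u(H)$. Independence of the choice of proper continuous metric follows since any two such metrics on $G$ are coarsely equivalent.

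The main obstacle is the comparison identity $\dim_{(G,\mu)} \ell^2 H^n_u(G) = \betti_n(G,\mu)$. If $G$ is totally disconnected and admits a geometric model, Theorem~\ref{thm: betti and reduced} directly identifies $\betti_n(G,\mu)$ with the dimension of the reduced $\ell^2$-cohomology of that model, and it remains to show that the $X_R$ eventually capture $\bar H^n(G, L^2(G))$. The genuine difficulty is that without a finiteness assumption on $G$ the system $(X_R)$ is a priori not cofinal among honest resolutions of $L^2(G)$ --- a true geometric model need not exist. I would overcome this via a rank-completion argument: every harmonic cochain on a true resolution of $L^2(G)$ is approximated, modulo arbitrarily small $\dim_{(G,\mu)}$-loss, by a harmonic cochain on $X_R$ for sufficiently large $R$; conversely, persistent large-scale classes lift to $\bar H^n(G, L^2(G))$. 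Additivity and normality of $\dim_{(G,\mu)}$ then yield the equality of dimensions, hence equivalence of vanishings. Groups with non-trivial connected component are handled by quotienting out a compact normal subgroup to reach the totally disconnected case, using compatibility of $\ell^2$-Betti numbers with such quotients.
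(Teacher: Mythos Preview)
The paper does not give a full proof of this theorem; it cites \cite{sauer+schroedl} for the general result and then only sketches the very special case where $G$ and $H$ are totally disconnected, compactly generated, and each admits a \emph{cocompact} geometric model. In that sketch one uses Proposition~\ref{prop: geometric description of l2 cohomology} and Remark~\ref{rem: vanishing of betti} to pass to $\ell^2\bar H^\ast(X)$ and $\ell^2\bar H^\ast(Y)$, then invokes uniform contractibility and the connect-the-dots technique to produce a Lipschitz homotopy equivalence $X\to Y$, which by Pansu induces an isomorphism on reduced $\ell^2$-cohomology. Your proposal is more ambitious: by passing to a directed system of Rips-type complexes and a direct-limit invariant $\ell^2H^\ast_u(G)$, you avoid assuming the existence of a single cocompact model and aim for the general statement. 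This is indeed the shape of the argument in the discrete case without finiteness assumptions (as in \cite{taka}) and is a reasonable line of attack.

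There is, however, a genuine gap in your handling of groups with non-trivial connected component. You twice assert that one reaches the totally disconnected case by ``quotienting out a compact normal subgroup'', but this is false in general: the identity component $G^0$ is closed and normal but need not be compact (take $G=\mathbb{R}$, or any non-compact connected Lie group). No compact normal quotient makes such a group totally disconnected. Relatedly, your Rips-type construction posits vertices on ``the orbit of a compact-open subgroup'', which presupposes the existence of compact-open subgroups---again a feature of totally disconnected groups (van Dantzig) that fails for connected ones. The correct reduction, as indicated in the final remark of the paper, goes through the amenable radical and the Burger--Monod structure theorem: if the amenable radical is non-compact all $\ell^2$-Betti numbers vanish; if it is compact one quotients by it and then (up to finite index) splits off a semisimple Lie factor, which is handled separately. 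This is substantially more delicate than what you outline, and without it your argument covers only the totally disconnected case. The ``rank-completion'' step you allude to for the comparison $\dim_{(G,\mu)}\ell^2H^n_u(G)=\betti_n(G,\mu)$ is also where most of the real work lies and would need to be made precise, but the structural gap above is the primary obstruction.
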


To give an idea why this is true, let us consider compactly generated, second countable, totally disconnected, locally compact, unimodular groups $G$ and $H$.  
We assume, in addition, that $G$ and $H$ admit cocompact (simplicial) geometric models $X$ and $Y$, respectively. Endowed with the simplicial path metrics induced by the Euclidean metric on standard simplices, $X$ and $Y$ are quasi-isometric to $G$ and $H$, respectively. 

By Proposition~\ref{prop: geometric description of l2 cohomology} and
Remark~\ref{rem: vanishing of betti} we obtain that
\[
	\betti_p(G)>0\Leftrightarrow \ell^2 \bar H^p(X)\ne 0.
\]
Similarly for $H$ and $Y$. 
Since $X$ and $Y$ are quasi-isometric and both are uniformly contractible, 
the connect-the-dots technique (see e.g.~\cite[Proposition~A.1]{brady+farb})
yields a Lipschitz homotopy equivalence $f\colon X\to Y$.
Pansu~\cite{pansu} proves that from that we obtain an isomorphism
\[ \ell^2 \bar H^p(X)\cong \ell^2 \bar H^p(Y)\]
in all degrees~$p$, implying the above theorem. 

The phenomenon that group homological invariants can be viewed as
coarse-geometric invariants is not unique to the theory of $\ell^2$-cohomology or $\ell^2$-Betti numbers, of course.
For instance, $H^\ast(\Gamma, \ell^\infty(\Gamma))$ is
isomorphic to the uniformly finite homology by Block and Weinberger~\cite{block-weinberger}.
But unlike for $H^\ast(\Gamma, \ell^\infty(\Gamma))$
the Hilbert-space structure allows to numerically measure the size
of the groups $H^\ast(\Gamma, \ell^2(\Gamma))$,
which are in general huge and unwieldy as abelian groups.
\subsection{Examples and computations}

Computations of $\ell^2$-Betti numbers of groups
are rare, especially the ones where there is a non-zero $\ell^2$-Betti number
in some degree.
But sometimes the computation, at least the non-vanishing result, follows quite formally. We present two such cases.

\begin{example}
	Let $\Gamma=F_2$ be the free group of rank~2. A geometric model is the $4$-regular tree~$T$.
	By the explicit cocycle construction in
	Example~\ref{exa: free group} we already know that $\betti_1(\Gamma)\ne 0$. But since $\betti_0(\Gamma)=0$ (since $\Gamma$ is
	infinite) and $\betti_p(\Gamma)=0$ for $p>1$, this also follows from the Euler-Poincare formula:
	\[ \betti_1(\Gamma)=-\chi(T;\Gamma)=-\chi(S^1\vee S^1)=1\]
\end{example}

The only relevant information for the previous example was the number of equivariant cells in the
geometric model. The same technique helps in the next example (cf.~\cite[5.29]{petersen-phd}).

\begin{example}
	Let $G=SL_3(\bbQ_p)$. For exact computations of $\ell^2$-Betti numbers of Chevalley groups over $\bbQ_p$ or $\mathbb{F}_p((t^{-1}))$ (and their lattices) we refer to~\cite{petersen+sauer+thom}. Here we only 
	show $\betti_2(G,\mu)\ne 0$ as easily as possible and then apply this to
	the deficiency of lattices in $G$.
	As geometric model, we take the Bruhat-Tits building~$X$ of~$G$, which is $2$-dimensional.
	By the fact that there are no $3$-cells and by additivity of dimension, we obtain that
	\begin{align*}
		\betti_2(G,\mu)&=\dim_{(G,\mu)}(\coker(d^1)) \\&\ge
		\dim_{(G,\mu)}\bigl(\hom_\bbC(C^\CW_2(X), L^2(G))^G \bigr)
		-\dim_{(G,\mu)}\bigl(\hom_\bbC(C^\CW_1(X), L^2(G))^G \bigr)
	\end{align*}
	In dimension $2$ there is only one equivariant cell with stabiliser~$B$, the Iwahori subgroup of $G$.
	Hence
	\[
		\hom_\bbC(C^\CW_2(X), L^2(G))^G\cong\map(G/B, L^2(G))^G\cong L^2(G)^B
	\]
	and with Example~\ref{exa: compact-open subgroup} it follows that
	$\dim_{(G,\mu)}\bigl(\hom_\bbC(C^\CW_2(X), L^2(G))^G\bigr)=1/\mu(B)$.
	We normalize $\mu$ such that $\mu(B)=1$. There are three equivariant $1$-cells corresponding to the
	$1$-dimensional faces of the $2$-dimensional fundamental chamber. The stabiliser of each
	splits into $p+1$ many cosets of $B$. Therefore the $\mu$-measure of each stabiliser is $(p+1)$.
	Similarly as above, this yields
	\[
		\dim_{(G,\mu)}\bigl(\hom_\bbC(C^\CW_1(X), L^2(G))^G\bigr)=\frac{3}{p+1}.
	\]
	Let $p\ge 3$. Then we obtain that $\betti_2(G,\mu)\ge 1-\frac{3}{p+1}>0$.
	Let us consider a lattice $\Gamma<G$. By Theorem~\ref{thm: locally compact prop principle},
	\[\betti_2(\Gamma)=\mu(\Gamma\bs G)\betti_p(G,\mu)\ge  (1-\frac{3}{p+1})\mu(\Gamma\bs G).\]
	Let $R$ be a finite presentation of $\Gamma$, and let $g$ be the number of generators and $r$ be the
	number of relations in $R$. Let $X(R)$ be the universal covering of the presentation complex of $R$.
	One can regard $X(R)$ as the $2$-skeleton of a geometric model $Y$ from which we can compute the
	$\ell^2$-Betti numbers of $\Gamma$. By the Euler-Poincare formula,
	\[ g-r=1-\chi(X(R);\Gamma)\le 1-\betti_0(\Gamma)+\betti_1(\Gamma)-\betti_2(\Gamma)\le 1-(1-\frac{3}{p+1})\mu(\Gamma\bs G).\]
	We also used that $\Gamma$ has property (T) which implies that $\betti_1(\Gamma)=0$.
	Hence the deficiency of $\Gamma$, which is defined as the maximal value $g-r$ over all finite presentations, is bounded from
	above by $1-(1-3/(p+1))\mu(\Gamma\bs G)$.
\end{example}

\begin{remark}
The computation of $\ell^2$-Betti numbers of locally compact groups reduces to the case of
totally disconnected groups. Let $G$ be a (second countable, unimodular) locally compact group.
If its amenable radical~$K$, its largest normal amenable (closed) subgroup, is non-compact, then
$\betti_p(G,\mu)=0$ for all $p\ge 0$ by~\cite[Theorem~C]{kyed+petersen+vaes}, which generalises a result of
Cheeger and Gromov for discrete groups~\cite{cheeger+gromov}. So let us assume that $K$ is compact. Endowing $G/K$ with
the pushforward $\nu$ of~$\mu$, one obtains that $\betti_p(G/K,\nu)=\betti_p(G,\mu)$~\cite[Theorem~3.14]{petersen-phd}.
So we may and will assume
that the amenable radical of $G$ is trivial. Upon replacing $G$ by a subgroup of finite index, $G$ splits then
as a product of a centerfree non-compact semisimple Lie group $H$ and a totally disconnected group $D$.
This is an observation of Burger and Monod~\cite[Theorem~11.3.4]{monod-book}, based on the positive solution of Hilbert's 5th problem.
Since $H$ possesses lattices, one can use to Borel's computations of $\ell^2$-Betti numbers of such
lattices~\cite[Chapter~5]{lueck-book} and Theorem~\ref{thm: locally compact prop principle} to obtain a computation for~$H$.
A K\"unneth formula~\cite[Theorem~6.7]{petersen-phd} then yields the $\ell^2$-Betti numbers of $G=H\times D$ provided one is able to compute the $\ell^2$-Betti numbers of $D$.
\end{remark}

\begin{bibdiv}
\begin{biblist}

\bib{atiyah}{incollection}{
      author={Atiyah, M.~F.},
       title={Elliptic operators, discrete groups and von {N}eumann algebras},
        date={1976},
   booktitle={Colloque ``{A}nalyse et {T}opologie'' en l'{H}onneur de {H}enri
  {C}artan ({O}rsay, 1974)},
   publisher={Soc. Math. France, Paris},
       pages={43\ndash 72. Ast\'erisque, No. 32\ndash 33},
}

\bib{block-weinberger}{incollection}{
      author={Block, Jonathan},
      author={Weinberger, Shmuel},
       title={Large scale homology theories and geometry},
        date={1997},
   booktitle={Geometric topology ({A}thens, {GA}, 1993)},
      series={AMS/IP Stud. Adv. Math.},
      volume={2},
   publisher={Amer. Math. Soc., Providence, RI},
       pages={522\ndash 569},
}

\bib{brady+farb}{article}{
   author={Brady, Noel},
   author={Farb, Benson},
   title={Filling-invariants at infinity for manifolds of nonpositive
   curvature},
   journal={Trans. Amer. Math. Soc.},
   volume={350},
   date={1998},
   number={8},
   pages={3393--3405},
 }

\bib{brown}{book}{
      author={Brown, Kenneth~S.},
       title={Cohomology of groups},
      series={Graduate Texts in Mathematics},
   publisher={Springer-Verlag, New York},
        date={1994},
      volume={87},
        note={Corrected reprint of the 1982 original},
}

\bib{cheeger+gromov}{article}{
      author={Cheeger, Jeff},
      author={Gromov, Mikhael},
       title={{$L_2$}-cohomology and group cohomology},
        date={1986},
        ISSN={0040-9383},
     journal={Topology},
      volume={25},
      number={2},
       pages={189\ndash 215},
        }

\bib{cornulier-book}{book}{
   author={Cornulier, Yves},
   author={de la Harpe, Pierre},
   title={Metric Geometry of Locally Compact Groups},
   series={EMS Tracts in Mathematics Vol. 25},
   publisher={European Mathematical Society, Z\"urich},
   date={2016},
   pages={243},
}

	\bib{dymara}{article}{
	   author={Dymara, Jan},
	   title={Thin buildings},
	   journal={Geom. Topol.},
	   volume={10},
	   date={2006},
	   pages={667--694},
	}

	\bib{coxeter}{article}{
	   author={Davis, Michael W.},
	   author={Dymara, Jan},
	   author={Januszkiewicz, Tadeusz},
	   author={Meier, John},
	   author={Okun, Boris},
	   title={Compactly supported cohomology of buildings},
	   journal={Comment. Math. Helv.},
	   volume={85},
	   date={2010},
	   number={3},
	   pages={551--582},
	  }

\bib{farber}{article}{
      author={Farber, Michael},
       title={von {N}eumann categories and extended {$L^2$}-cohomology},
        date={1998},
        ISSN={0920-3036},
     journal={$K$-Theory},
      volume={15},
      number={4},
       pages={347\ndash 405},
       }
\bib{gaboriau-survey}{article}{
   author={Gaboriau, Damien},
   title={On orbit equivalence of measure preserving actions},
   conference={
      title={Rigidity in dynamics and geometry},
      address={Cambridge},
      date={2000},
   },
   book={
      publisher={Springer, Berlin},
   },
   date={2002},
   pages={167--186},
}

\bib{gaboriau}{article}{
      author={Gaboriau, Damien},
       title={Invariants {$l^2$} de relations d'\'equivalence et de groupes},
        date={2002},
        ISSN={0073-8301},
     journal={Publ. Math. Inst. Hautes \'Etudes Sci.},
      number={95},
       pages={93\ndash 150},
        }
\bib{gaboriau-percolation}{article}{
   author={Gaboriau, D.},
   title={Invariant percolation and harmonic Dirichlet functions},
   journal={Geom. Funct. Anal.},
   volume={15},
   date={2005},
   number={5},
   pages={1004--1051},
}
		
\bib{guichardet}{book}{
      author={Guichardet, A.},
       title={Cohomologie des groupes topologiques et des alg\`ebres de {L}ie},
      series={Textes Math\'ematiques [Mathematical Texts]},
   publisher={CEDIC, Paris},
        date={1980},
      volume={2},
      
}
\bib{harpe}{book}{
   author={de la Harpe, Pierre},
   title={Topics in geometric group theory},
   series={Chicago Lectures in Mathematics},
   publisher={University of Chicago Press, Chicago, IL},
   date={2000},
   pages={vi+310},
  }

\bib{kyed+petersen+vaes}{article}{
      author={{Kyed}, D.},
      author={{Petersen}, H.D.},
      author={{Vaes}, S.},
       title={{$L^2$-Betti numbers of locally compact groups and their cross
  section equivalence relations}},
        date={2013-02},
     journal={ArXiv e-prints},
      eprint={1302.6753},
}

\bib{lueck-dimension}{article}{
      author={L{\"u}ck, Wolfgang},
       title={Dimension theory of arbitrary modules over finite von {N}eumann
  algebras and {$L^2$}-{B}etti numbers. {I}. {F}oundations},
        date={1998},
        ISSN={0075-4102},
     journal={J. Reine Angew. Math.},
      volume={495},
       pages={135\ndash 162},
         }

\bib{lueck-book}{book}{
      author={L{\"u}ck, Wolfgang},
       title={{$L^2$}-invariants: theory and applications to geometry and
  {$K$}-theory},
      series={Ergebnisse der Mathematik und ihrer Grenzgebiete. 3. Folge. A
  Series of Modern Surveys in Mathematics [Results in Mathematics and Related
  Areas. 3rd Series. A Series of Modern Surveys in Mathematics]},
   publisher={Springer-Verlag, Berlin},
        date={2002},
      volume={44},
        }

\bib{lueck-classifying}{incollection}{
      author={L{\"u}ck, Wolfgang},
       title={Survey on classifying spaces for families of subgroups},
        date={2005},
   booktitle={Infinite groups: geometric, combinatorial and dynamical aspects},
      series={Progr. Math.},
      volume={248},
   publisher={Birkh\"auser, Basel},
       pages={269\ndash 322},
       }
\bib{taka}{article}{
   author={Mimura, Masato},
   author={Ozawa, Narutaka},
   author={Sako, Hiroki},
   author={Suzuki, Yuhei},
   title={Group approximation in Cayley topology and coarse geometry, III:
   Geometric property (T)},
   journal={Algebr. Geom. Topol.},
   volume={15},
   date={2015},
   number={2},
   pages={1067--1091},
}

\bib{monod-book}{book}{
      author={Monod, Nicolas},
       title={Continuous bounded cohomology of locally compact groups},
      series={Lecture Notes in Mathematics},
   publisher={Springer-Verlag, Berlin},
        date={2001},
      volume={1758},
      }

\bib{neumann}{article}{
      author={Murray, F.~J.},
      author={Von~Neumann, J.},
       title={On rings of operators},
        date={1936},
        ISSN={0003-486X},
     journal={Ann. of Math. (2)},
      volume={37},
      number={1},
       pages={116\ndash 229},
        }

\bib{pansu}{article}{
      author={Pansu, Pierre},
       title={Cohomologie $l^p$: invariance sous quasiisometries},
        date={1995},
     journal={Preprint},
         url={http://www.math.u-psud.fr/~pansu/qi04.pdf},
}

\bib{pedersen-book}{book}{
      author={Pedersen, Gert~K.},
       title={{$C^{\ast} $}-algebras and their automorphism groups},
      series={London Mathematical Society Monographs},
   publisher={Academic Press, Inc. [Harcourt Brace Jovanovich, Publishers],
  London-New York},
        date={1979},
      volume={14},
        
}

\bib{petersen}{article}{
      author={Petersen, H.~D.},
       title={{$L^2$}-{B}etti numbers of locally compact groups},
        date={2013},
        ISSN={1631-073X},
     journal={C. R. Math. Acad. Sci. Paris},
      volume={351},
      number={9-10},
       pages={339\ndash 342},
        }

\bib{petersen-phd}{article}{
      author={{Petersen}, H.D.},
       title={{{$L^2$}-Betti Numbers of Locally Compact Groups}},
        date={2011-04},
     journal={ArXiv e-prints},
      eprint={1104.3294},
}
\bib{petersen+sauer+thom}{article}{
   author = {{Densing Petersen}, H.},
   author={Sauer, Roman},
   author={Thom, Andreas},
   title = {$L^2$-Betti numbers of totally disconnected groups and their approximation by Betti numbers of lattices},
  journal = {ArXiv e-prints},
archivePrefix = {arXiv},
   eprint = {1612.04559},
 	 year = {2016},
    }

\bib{peterson+thom}{article}{
      author={Peterson, Jesse},
      author={Thom, Andreas},
       title={Group cocycles and the ring of affiliated operators},
        date={2011},
        ISSN={0020-9910},
     journal={Invent. Math.},
      volume={185},
      number={3},
       pages={561\ndash 592},
        }

\bib{sauer-betti}{article}{
      author={Sauer, Roman},
       title={{$L^2$}-{B}etti numbers of discrete measured groupoids},
        date={2005},
        ISSN={0218-1967},
     journal={Internat. J. Algebra Comput.},
      volume={15},
      number={5-6},
       pages={1169\ndash 1188},
         }
\bib{sauer+schroedl}{article}{
   author={Sauer, Roman},
   author={Schr\"odl, Michael},
   title={Vanishing of $\ell^2$-Betti numbers of locally compact groups as an invariant of coarse equivalence},
   note={Preprint, arXiv:1702.01685}
   date={2017},
}
\bib{struble}{article}{
   author={Struble, Raimond A.},
   title={Metrics in locally compact groups},
   journal={Compositio Math.},
   volume={28},
   date={1974},
   pages={217--222},
}

\bib{takesaki}{book}{
      author={Takesaki, M.},
       title={Theory of operator algebras. {I}},
      series={Encyclopaedia of Mathematical Sciences},
   publisher={Springer-Verlag, Berlin},
        date={2002},
      volume={124},
        ISBN={3-540-42248-X},
        note={Reprint of the first (1979) edition, Operator Algebras and
  Non-commutative Geometry, 5},
}

\bib{tomdieck}{book}{
      author={tom Dieck, Tammo},
       title={Transformation groups},
      series={de Gruyter Studies in Mathematics},
   publisher={Walter de Gruyter \& Co., Berlin},
        date={1987},
      volume={8},
       }

\end{biblist}
\end{bibdiv}

\end{document}